\crefname{equation}{}{}
\crefname{enumi}{}{}
\newlist{conenum}{enumerate}{1}
\setlist[conenum,1]{label=(\roman*),ref=\roman*}
\crefname{conenumi}{}{}
\numberwithin{equation}{section}
\theoremstyle{plain}
\newtheorem{Theorem}{Theorem}
\crefname{Theorem}{Theorem}{Theorems}
\newtheorem{theorem}[equation]{Theorem}
\newtheorem{proposition}[equation]{Proposition}
\newtheorem{lemma}[equation]{Lemma}
\newtheorem{corollary}[equation]{Corollary}
\theoremstyle{definition}
\newtheorem{definition}[equation]{Definition}
\newtheorem{example}[equation]{Example}
\newtheorem{question}[equation]{Question}
\theoremstyle{remark}
\newtheorem{remark}[equation]{Remark}
\newcommand{\FF}{\mathbf{F}}
\newcommand{\ZZ}{\mathbf{Z}}
\newcommand{\RR}{\mathbf{R}}
\newcommand{\E}{\mathrm{E}}
\newcommand{\D}{\operatorname{D}}
\newcommand{\Ext}{\operatorname{Ext}}
\newcommand{\Mod}{\operatorname{Mod}}
\newcommand{\Shv}{\operatorname{Shv}}
\newcommand{\Spec}{\operatorname{Spec}}
\newcommand{\Sym}{\operatorname{Sym}}
\newcommand{\Tot}{\operatorname{Tot}}
\newcommand{\V}{\operatorname{V}}
\newcommand{\cofib}{\operatorname{cofib}}
\newcommand{\coker}{\operatorname{coker}}
\newcommand{\fib}{\operatorname{fib}}
\newcommand{\id}{\operatorname{id}}
\newcommand{\op}{\operatorname{op}}
\newcommand{\X}{\mathord{-}}
\newcommand{\Cat}[1]{\mathsf{#1}}
\newcommand{\Cls}[1]{\mathscr{#1}}
\title{On cohomology of locally profinite sets}
\author{Ko Aoki}
\address{Max Planck Institute for Mathematics,
  Vivatsgasse 7, 53111 Bonn, Germany
}
\email{aoki@mpim-bonn.mpg.de}
\date{\today}
\begin{document}

\begin{abstract}
  We construct a locally profinite set
  of cardinality~\(\aleph_{\omega}\)
  with infinitely many first cohomology classes
  of which any distinct finite product does not vanish.
  Building on this,
  we construct the first example
  of a nondescendable faithfully flat map between commutative rings
  of cardinality~\(\aleph_{\omega}\)
  within Zermelo--Fraenkel set theory.
\end{abstract}

\maketitle

\section{Introduction}\label{s:intro}

Profinite sets,
aka totally disconnected compact Hausdorff spaces,
are cohomologically simple;
their (sheaf-theoretic) cohomological dimension
is zero.
We then consider the notion of a locally profinite set,
i.e., a topological space obtained by removing one point
from a profinite set.
Then the situation becomes significantly more complicated:
Pierce~\cite[Theorem~5.1]{Pierce67} showed that
\(\{0,1\}^{\aleph_{n}}\) minus a point
has cohomological dimension~\(n\)
for any \(n\geq0\).
Wiegand~\cite[Example~3]{Wiegand68}
pointed out that there is a locally profinite set
of cardinality~\(\aleph_{1}\)
with nontrivial first \(\FF_{2}\)-cohomology.
In this paper,
we show the following:

\begin{Theorem}\label{main_w}
  For \(n\geq0\),
  there is a locally profinite set~\(U\)
  of cardinality~\(\aleph_{n}\)
  with \(H^{n}(U;\FF_{2})\neq0\).
\end{Theorem}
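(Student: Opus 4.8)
The plan is to write down an explicit space. For $n\geq1$ set
\[
U_n := \omega_1\times\omega_2\times\cdots\times\omega_n,
\]
the product of the ordinal spaces $\omega_\ell$ (the set of ordinals ${<}\omega_\ell$ with the order topology, which is locally compact, Hausdorff and totally disconnected); then $U_n$ is again locally compact, Hausdorff and totally disconnected, hence locally profinite, and $|U_n|=\aleph_1\cdots\aleph_n=\aleph_n$. (For $n=0$ take $\mathbf{N}$.) The remaining point is that $H^n(U_n;\FF_2)\neq0$, and I would compute this through the profinite compactification $K:=\prod_{\ell=1}^n(\omega_\ell+1)$, in which $U_n=K\setminus\partial$ with $\partial:=\bigcup_{\ell=1}^nF_\ell$ and $F_\ell:=\{x:x_\ell=\omega_\ell\}$ the $\ell$\textsuperscript{th} (closed, non-open) coordinate face. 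The localization triangle $R\Gamma_\partial(K;\FF_2)\to R\Gamma(K;\FF_2)\to R\Gamma(U_n;\FF_2)$ together with $R\Gamma(K;\FF_2)=C(K,\FF_2)$ concentrated in degree $0$ (as $K$ is profinite) gives $H^n(U_n;\FF_2)\cong H^{n+1}_\partial(K;\FF_2)$ for $n\geq1$, so it suffices to show $H^{n+1}_\partial(K;\FF_2)\neq0$.

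For that I would record two inputs. (i) $H^1(\omega_\ell;\FF_2)\neq0$ for every $\ell\geq1$: exhibiting $\omega_\ell=\bigcup_{\alpha<\omega_\ell}[0,\alpha]$ as a filtered union of clopen profinite subsets identifies $R\Gamma(\omega_\ell;\FF_2)$ with $R\projlim_{\alpha<\omega_\ell}C([0,\alpha],\FF_2)$; this inverse system has surjective transition maps but is not flabby, since at a limit $\lambda<\omega_\ell$ of cofinality $\omega$ the restriction $C([0,\lambda],\FF_2)\to C([0,\lambda),\FF_2)$ is not surjective, witnessed by a locally constant function oscillating along a sequence cofinal in $\lambda$; as the index is an ordinal $H^{\geq2}$ vanishes, so $H^1(\omega_\ell;\FF_2)=\projlim^1\neq0$. (ii) $R\Gamma_{\{\omega_\ell\}}(\omega_\ell+1;\FF_2)\simeq H^1(\omega_\ell;\FF_2)[-2]$ for $\ell\geq1$: the restriction $C(\omega_\ell+1,\FF_2)\to C(\omega_\ell,\FF_2)$ is an isomorphism because a continuous $\FF_2$-valued function on $\omega_\ell$ is eventually constant (a clopen subset of $\omega_\ell$, being closed, either contains or is disjoint from a club), so the fiber of $R\Gamma(\omega_\ell+1;\FF_2)\to R\Gamma(\omega_\ell;\FF_2)$ picks out exactly $H^1(\omega_\ell;\FF_2)$, shifted by $2$. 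A Künneth formula for local cohomology over $\FF_2$ then gives, for $S\subseteq\{1,\dots,n\}$ and $F_S:=\bigcap_{\ell\in S}F_\ell$,
\[
R\Gamma_{F_S}(K;\FF_2)\simeq\Bigl(\bigotimes_{\ell\in S}H^1(\omega_\ell;\FF_2)\Bigr)\otimes C\Bigl(\textstyle\prod_{\ell\notin S}(\omega_\ell+1),\FF_2\Bigr)[-2|S|].
\]
Feeding this into the Mayer--Vietoris spectral sequence for the closed cover $\{F_\ell\}$ of $\partial$: the $E_1$-page is supported on a single line (the stratum $F_S$ contributes only in cohomological degree $2|S|$, abutting to total degree $|S|+1$), so all differentials vanish for degree reasons, and the corner stratum $F_{\{1,\dots,n\}}=\{(\omega_1,\dots,\omega_n)\}$ contributes $\bigotimes_{\ell=1}^nH^1(\omega_\ell;\FF_2)$ to $H^{n+1}_\partial(K;\FF_2)$, which is nonzero by (i). (The same computation in fact shows $H^k(U_n;\FF_2)\neq0$ precisely for $0\leq k\leq n$.)

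The main obstacle is not any one of these computations but the foundational apparatus that legitimizes them: one needs that cohomology of locally profinite sets with $\FF_2$-coefficients is computed through clopen exhaustions (so $R\Gamma(U;\FF_2)=R\projlim_iC(U_i,\FF_2)$ for $U=\bigcup_iU_i$), that it satisfies a Künneth formula over $\FF_2$ for products — already $R\Gamma_{\{p\}}\bigl(\prod_\ell(\omega_\ell+1);\FF_2\bigr)\simeq\bigotimes_\ell R\Gamma_{\{\omega_\ell\}}(\omega_\ell+1;\FF_2)$ is the crux — and that no size pathologies arise even though the groups $H^1(\omega_\ell;\FF_2)$ are infinite-dimensional $\FF_2$-vector spaces. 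I would set this up by viewing locally profinite sets as condensed sets: a clopen exhaustion becomes a filtered colimit $\FF_2[U]=\operatorname{colim}_i\FF_2[U_i]$ of condensed $\FF_2$-modules, profinite sets are $\FF_2$-cohomologically trivial in positive degrees, and $\FF_2[-]$ is symmetric monoidal, which converts products of spaces into tensor products and yields the required Künneth statements; the finite cohomological dimension of the spaces in play (from the Goblot--Mitchell vanishing theorem, since the exhausting index posets have cofinality $\leq\aleph_n$) guarantees that the relevant cohomology functors commute with the filtered colimits of coefficients needed to reduce Künneth to the finite-dimensional case. Assembling this framework, and verifying the spectral-sequence bookkeeping so that the corner class genuinely survives, is where the real work lies; once it is in place, $U_n=\omega_1\times\cdots\times\omega_n$ answers \Cref{main_w}.
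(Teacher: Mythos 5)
Your route is genuinely different from the paper's, which uses products of one-point compactifications of \emph{discrete} sets, a finite \v{C}ech complex of function rings (\cref{x22ju6}), and the elementary counting argument of \cref{pq}. As written, however, your proposal has gaps at exactly the points where the mathematical content lives. The most serious is input (i): from ``the transition maps are surjective but the system is not flabby'' you conclude \(\varprojlim^{1}_{\alpha<\omega_{\ell}}\Cls{C}([0,\alpha];\FF_{2})\neq0\). That is a non sequitur: the failure of some auxiliary restriction map to be surjective does not produce a nonzero derived limit, and over an index set of uncountable cofinality there is no Mittag--Leffler-type criterion to invoke in either direction. Unwinding the definition, a nonzero class in this \(\varprojlim^{1}\) is a family \((t_{\alpha}\colon[0,\alpha]\to\FF_{2})_{\alpha}\) with \(t_{\beta}\restriction[0,\alpha]-t_{\alpha}\) locally constant for all \(\alpha\leq\beta\) that is not trivialized by any single \(t\colon\omega_{\ell}\to\FF_{2}\), i.e., a nontrivial coherent family modulo locally constant functions. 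Such families do exist on \(\omega_{1}\) in ZFC, but constructing one is a theorem in its own right (it is essentially the existence of nontrivial coherent sequences in the sense of Todor\v{c}evi\'c; cf.\ Talayco's work on the cohomology of \(\omega_{1}\)), and it is precisely the analogue of the nontriviality argument (\cref{xqxs5v} via \cref{pq}) that you have skipped. A second gap: you assert \(H^{\geq2}(\omega_{\ell};\FF_{2})=0\) ``as the index is an ordinal.'' The Goblot--Mitchell theorem you cite later only kills \(\varprojlim^{s}\) for \(s\) exceeding the cofinality index plus one, so for \(\ell\geq2\) it leaves \(\varprojlim^{2},\ldots,\varprojlim^{\ell+1}\) unconstrained. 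Your bookkeeping --- the identification \(R\Gamma_{\{\omega_{\ell}\}}(\omega_{\ell}+1;\FF_{2})\simeq H^{1}(\omega_{\ell};\FF_{2})[-2]\) and the single-line \(E_{1}\)-page that forces all differentials to vanish so that the corner class survives --- depends on this unproved vanishing.

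Third, you yourself flag the K\"unneth formula as ``the crux'' and ``where the real work lies,'' and I agree: sheaf cohomology of non-paracompact, non-finite-type spaces does not satisfy K\"unneth without substantial input. Already the statement \(R\Gamma((\omega_{1}+1)\times\omega_{2};\FF_{2})\simeq \Cls{C}(\omega_{1}+1;\FF_{2})\otimes R\Gamma(\omega_{2};\FF_{2})\), which you need in order to identify the local cohomology along \(F_{S}\) as an external tensor product, is of the same order of difficulty as the theorem itself, and the proposed condensed-mathematics scaffolding would additionally require a comparison between sheaf cohomology of these spaces and its condensed counterpart. By contrast, the paper's spaces \(X(0,\ldots,n)^{-}\) carry a canonical finite open cover whose members' intersections are disjoint unions of profinite sets, so the entire cohomology is a finite explicit \v{C}ech complex and no derived limits or K\"unneth formulas arise; the nonvanishing then reduces to \cref{pq}. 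I would recommend either supplying an actual construction of a nontrivial coherent family and then confronting the \(H^{\geq2}\) and K\"unneth issues head-on, or switching to discrete factors, where all three difficulties disappear.
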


Via Stone duality,
the category of profinite sets
is dual to the category of Boolean rings,
i.e., rings in which all elements are idempotent.
This correspondence is powerful;
e.g., Pierce used this to prove the aforementioned result.
Our proof of \cref{main_w} is purely topological,
but we can deduce the following result in homological algebra:

\begin{Theorem}\label{main_si}
  For \(n\geq0\),
  there is a Boolean ring
  of cardinality~\(\aleph_{n}\)
  with self-injective dimension~\(n+1\).
  There is also a Boolean ring of cardinality~\(\aleph_{\omega}\)
  with infinite self-injective dimension.
\end{Theorem}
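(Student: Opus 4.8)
The plan is to translate the statement into topology via Stone duality and Pierce's sheaf-theoretic analysis of modules over a commutative (von Neumann) regular ring \cite{Pierce67}, and then feed in \cref{main_w} together with the companion \(\aleph_{\omega}\)-construction. Recall that a Boolean ring \(B\) is commutative regular, so every finitely generated ideal is generated by an idempotent and corresponds to a compact open subset of \(X=\Spec B\); a general ideal \(J\), being the directed union of the finitely generated ideals it contains, corresponds to an open subset \(V(J)\subseteq X\). The first step is a natural identification
\[
  \Ext^{i}_{B}\bigl(B/J,\,B\bigr)\;\cong\;H^{i}_{\,X\setminus V(J)}\bigl(X;\underline{\FF_2}\bigr)\qquad(i\ge 0)
\]
(cohomology with supports, constant coefficients): in degree \(0\) both sides are the lattice of clopens of \(X\) contained in the interior of \(X\setminus V(J)\); in degree \(1\) it reduces to \(\operatorname{Hom}_{B}(\mathfrak a,B)\cong\Gamma(V;\underline{\FF_2})\) via \(\mathfrak a=\varinjlim Be\) over idempotents \(e\in\mathfrak a\) and \(\operatorname{Hom}_{B}(Be,B)\cong eB\); the higher cases follow from Pierce's structure theory, or by dimension shifting along such a presentation, using that \(X\) is profinite and so has vanishing positive cohomology.

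Since \(\operatorname{injdim}_{B}B\le k\) may be tested on cyclic modules, the identification yields
\[
  \operatorname{injdim}_{B}B \;=\; \sup\bigl\{\,i : H^{i}_{Z}(X;\underline{\FF_2})\neq 0\ \text{for some closed }Z\subseteq X\,\bigr\},
\]
and, by the local-cohomology long exact sequence together with \(H^{>0}(X;\underline{\FF_2})=0\), this equals \(1+\sup\{\operatorname{cohdim}(V):V\subseteq X\text{ open}\}\) whenever \(X\) has a non-isolated point, where \(\operatorname{cohdim}\) denotes the cohomological dimension for the constant sheaf \(\underline{\FF_2}\). So the task reduces to producing a Boolean ring of the prescribed cardinality whose Stone space \(X\) satisfies \(\sup_{V}\operatorname{cohdim}(V)=n\) (resp.\ \(=\infty\)).

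For finite \(n\), let \(U\) be the locally profinite set of cardinality \(\aleph_{n}\) of \cref{main_w}; then \(X=U^{+}\) is a profinite set with \(\aleph_{n}\) points whose Boolean ring \(B\) of clopens has cardinality \(\aleph_{n}\) (ensured by the construction). Taking \(Z=\{\infty\}\), i.e.\ \(V=U\), the identification gives \(\Ext^{n+1}_{B}(B/\mathfrak m_{\infty},B)\cong H^{n}(U;\underline{\FF_2})\neq 0\), so \(\operatorname{injdim}_{B}B\geq n+1\); for the reverse inequality one checks \(\operatorname{cohdim}(V)\leq n\) for every open \(V\subseteq X\), which I would deduce from Pierce's estimate \cite[Theorem~5.1]{Pierce67} (or from the explicit model of \(U\) behind \cref{main_w}) together with the locality of sheaf cohomological dimension on locally profinite spaces. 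For the \(\aleph_{\omega}\)-case, take instead the locally profinite set \(U\) of cardinality \(\aleph_{\omega}\) carrying first cohomology classes \(\alpha_{1},\alpha_{2},\dots\) with every finite product of distinct \(\alpha_{i}\) nonzero; then \(H^{k}(U;\FF_2)\neq 0\) for all \(k\), so \(\operatorname{cohdim}(U)=\infty\), and the same dictionary forces \(\operatorname{injdim}_{B}B=\infty\) for the corresponding Boolean ring, again of cardinality \(\aleph_{\omega}\).

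The main obstacle is the identification of the first paragraph: extracting it from (or re-proving it within) Pierce's framework, and in particular controlling the derived inverse limits that appear when a general ideal is presented as a directed colimit of principal ones, so that the constant-coefficient nonvanishing of \cref{main_w} is genuinely detected by \(\Ext^{n+1}\). A secondary difficulty is the upper bound in the previous paragraph, which requires cohomological-dimension control for all open subsets of \(U^{+}\) and not merely for \(U^{+}\) minus one point; and the construction of the \(\aleph_{\omega}\)-space on which infinitely many degree-one classes have all finite products nonvanishing is the remaining hard ingredient, though it belongs to the topological part of the paper rather than to this homological-algebra deduction.
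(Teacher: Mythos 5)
Your proposal follows essentially the same route as the paper: Stone duality turns the self-injective dimension of \(B\) into a statement about the cohomology of the open subsets of \(X=\Spec B\), and the input is then the space of \cref{main_w} (resp.\ a countable disjoint union of such spaces --- or, as you suggest, the non-nilpotence space, which also works but is stronger than needed --- for the \(\aleph_{\omega}\) case). Your local-cohomology formulation, combined with \(H^{>0}(X;\FF_{2})=0\), is exactly the degree shift the paper performs with the exact sequence \(J\to B\to B/J\) in \cref{xazmzz}.

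Two remarks. First, what you single out as the main obstacle dissolves once you use the sheaf-theoretic form of Stone duality (\cref{x4mdjv}): \(\Mod_{B}\) is equivalent to the category of sheaves of \(\FF_{2}\)-vector spaces on \(X\), under which an ideal \(J\) becomes the extension by zero \(j_{!}\FF_{2}\) from the open subset \(U=X\setminus\V(J)\), so \(\Ext^{*}_{B}(J,B)\cong H^{*}(U;\FF_{2})\) directly, with no derived inverse limits to control. Second, and more substantively, your justification of the upper bound \(\operatorname{cohdim}(V)\leq n\) for every open \(V\subseteq X\) is the one genuinely shaky step: sheaf cohomological dimension is \emph{not} local in the relevant sense here, since every open subset of a profinite set is an increasing union of clopen (hence cohomologically trivial) subsets, and the whole point of the paper is that nontrivial cohomology nevertheless appears in such a union; likewise the cited result of Pierce concerns one specific space rather than all opens of \(U^{+}\). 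The tool the paper uses instead is the cardinality bound on global dimension (Osofsky, quoted as \cref{xjlvu9}): a Boolean ring of cardinality \(\aleph_{n}\) has global dimension \(\leq n+1\), hence self-injective dimension \(\leq n+1\), with no further topological input. With that substitution your argument is complete and matches the paper's.
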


We then address
the question of whether the product structure of cohomology can also be nontrivial.
We prove that it is indeed the case:

\begin{Theorem}\label{main}
  For \(n\geq0\),
  there is a locally profinite set~\(U\) of cardinality~\(\aleph_{2n+1}\)
  with
  classes \(\eta_{0}\), \dots, \(\eta_{n}\in H^{1}(U;\FF_{2})\)
  whose product \(\eta_{0}\dotsb\eta_{n}\in H^{n+1}(U;\FF_{2})\) is nonzero.
\end{Theorem}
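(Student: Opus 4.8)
The plan is to recast the statement in terms of higher derived limits with their cup products, and to build the required space by induction on $n$, adjoining one degree-one class at each step. Write a locally profinite set as $U = X\setminus\{\ast\}$ with $X$ profinite; then $U$ is the directed union of its compact open subsets, each of which is clopen in $X$, hence profinite, so
\[
  R\Gamma(U;\FF_2)\;\simeq\;R\lim_{K\in\Cls{K}(U)} C(K;\FF_2),
\]
the limit taken over the directed poset $\Cls{K}(U)$ of compact opens with the (surjective) restriction maps. Since each $C(K;\FF_2)$ lies in degree zero, $H^{k}(U;\FF_2)=\lim^{k}_{K}C(K;\FF_2)$, and the cup product is induced by the Boolean ring structures on the $C(K;\FF_2)$. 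Thus the theorem asks for a locally profinite set of cardinality $\aleph_{2n+1}$ carrying $1$-cocycles on its compact-open poset whose $(n{+}1)$-fold product is not a coboundary. The case $n=0$ is \cref{main_w} in degree one (Wiegand's example): a locally profinite set $V$ of cardinality $\aleph_{1}$ with a fixed nonzero $\eta\in H^{1}(V;\FF_2)$, the building block.

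For the inductive step I would take $U'$ locally profinite of cardinality $\aleph_{2n-1}$ with $\eta_{0},\dotsc,\eta_{n-1}\in H^{1}(U';\FF_2)$ whose product is nonzero in $H^{n}(U';\FF_2)$, and produce a new locally profinite $U$ of cardinality $\aleph_{2n+1}$ as a product-type amalgam of $U'$ with the block $V$ — a locally profinite subquotient of $(U')^{+}\times V^{+}$, arranged so that it carries classes $\eta_{0},\dotsc,\eta_{n}\in H^{1}(U;\FF_2)$ with $\eta_{0}\dotsb\eta_{n-1}\eta_{n}\neq 0$ in $H^{n+1}(U;\FF_2)$. Two naive candidates fail instructively. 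The one-point compactified product $\bigl((U')^{+}\times V^{+}\bigr)\setminus\{\mathrm{pt}\}$ has, in positive degrees, the cohomology of $U'\times V$ shifted up by one, so the factor classes live in $H^{2}$ rather than $H^{1}$. The honest product $U'\times V$ is genuinely locally profinite and has the degrees right, but sheaf cohomology does not commute with the limits computing $R\Gamma(U')$ and $R\Gamma(V)$ — tensoring over $\FF_2$ with an infinite-dimensional $C(K;\FF_2)$ does not commute with $R\lim$ — so one has only a Künneth map $H^{\ast}(U';\FF_2)\otimes H^{\ast}(V;\FF_2)\to H^{\ast}(U'\times V;\FF_2)$ with higher-derived-limit corrections, and at cardinality $\aleph_{2n-1}$ those corrections can absorb the product class. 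The remedy is to enlarge $U'$ first — to a locally profinite model of the same cohomology ring whose compact-open poset has enough cofinality that the Künneth map stays injective on $\eta_{0}\dotsb\eta_{n-1}\otimes\eta$ after the product with $V$ — and the room this demands is precisely what raises the cardinality from $\aleph_{2n-1}$ to $\aleph_{2n+1}$: one aleph for the new class, one more for the new multiplication.

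The crux, and the step I expect to be hardest, is certifying that $\eta_{0}\dotsb\eta_{n}$ survives with no Künneth theorem available. I would work at the cochain level: exhibit explicit $1$-cocycles $z_{i}$ on $\Cls{K}(U)$ for the $\eta_{i}$, form the product $(n{+}1)$-cocycle $z_{0}\smile\dotsb\smile z_{n}$, and show it is not a coboundary by a direct counting and diagonalization over the $\aleph_{2n+1}$ coordinates of the ambient profinite set — in the tradition of Pierce's and Mitchell's nonvanishing computations for derived limits over posets of finite subsets of a cardinal — bootstrapping from the inductive hypothesis that $z_{0}\smile\dotsb\smile z_{n-1}$ is already not a coboundary on $\Cls{K}(U')$. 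The delicate ingredient inside this is a Goblot--Mitchell-type bound confining the Künneth corrections to derived-limit degrees that the product class does not meet; the verification that two extra alephs per step are both necessary and sufficient lives here. Running the construction with countably many independent copies of $V$ at once and passing to the colimit over finite sub-families then yields the locally profinite set of cardinality $\aleph_{\omega}$ with classes $\eta_{0},\eta_{1},\dotsc\in H^{1}$ every distinct finite product of which is nonzero.
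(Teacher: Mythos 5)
Your reformulation of $H^{*}(U;\FF_2)$ as derived limits over the compact-open poset is reasonable, and you have correctly located the two difficulties: producing degree-one classes on a product-like space, and the failure of any naive K\"unneth formula for these infinite products. But the proposal stops exactly where the proof has to start. The space is never constructed: the ``product-type amalgam'' obtained by ``enlarging $U'$ to a model whose compact-open poset has enough cofinality'' is not specified, and the claim that two extra alephs per step are what this enlargement costs is asserted rather than derived. The nonvanishing is never proved either: it is deferred to an unspecified ``direct counting and diagonalization'' plus a ``Goblot--Mitchell-type bound'' confining K\"unneth corrections to harmless degrees. None of this can be checked, and the last item is not obviously even the right tool --- what is needed is not an estimate on correction terms but an exact identification of $H^{n+1}$ of the product, since the class one must detect lives precisely in the top degree where those corrections would sit.

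For comparison, the paper takes $U=X(0,1)^{-}\times\dotsb\times X(2n,2n+1)^{-}$, the honest product of planks built from strictly increasing cardinals --- essentially the candidate you set aside as endangered by K\"unneth failure. The point is that no K\"unneth approximation is invoked: writing the constant sheaf on each factor as $\cofib\bigl((\FF_{2})_{V_{i}\cap W_{i}}\to(\FF_{2})_{V_{i}}\oplus(\FF_{2})_{W_{i}}\bigr)$ and taking external tensor products yields a canonical isomorphism $H^{n+1}(U;\FF_{2})\simeq H^{2n+1}(X(0,\ldots,2n+1)^{-};\FF_{2})$ (\cref{x2gquz}), under which $\eta_{0}\dotsb\eta_{n}$ corresponds to the explicit ``box'' cocycle of \cref{xayddy} in the combinatorial cochain model of \cref{x22ju6}. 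Its nonvanishing is then a concrete induction (\cref{xqxs5v}) whose engine is \cref{pq}: a function on $P\times Q$ almost constant in the $Q$-direction becomes genuinely constant there after deleting at most $\lvert P\rvert$ elements of $Q$, which is why each successive coordinate must carry a strictly larger cardinal. That, rather than ``one aleph for the new class and one for the new multiplication,'' is the actual source of $\aleph_{2n+1}$. If you want to salvage your outline, the missing ingredients are precisely these two: an exact cochain-level computation of the product's cohomology replacing the K\"unneth discussion, and the combinatorial lemma driving the induction.
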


We can produce an interesting counterexample
in commutative algebra from~\cref{main}.
The notion of descendability
was introduced by Mathew~\cite{Mathew16}
(and independently by Balmer~\cite{Balmer16} under a different name).
Classically,
we ask for descent of module spectra.
We can go one level higher\footnote{In fact,
  this is the highest level of descent we can ask for;
  when we consider descent
  of linear presentable \((\infty,n)\)-categories
  (in the sense of~\cite{Stefanich})
  for \(n\geq1\),
  we obtain the same notion.
} and ask for descent
of linear (presentable \(\infty\)-)categories.
This is the notion of descendability.
Unarguably,
the most well-known descent
of modules
is Grothendieck's faithfully flat descent~\cite[Exposé~VIII]{SGA1}
and
it is natural to ask whether this can be upgraded
to linear categories.
It is known that
every faithfully flat map over an \(\E_{\infty}\)-ring~\(A\)
with \(\lvert\pi_{*}A\rvert<\aleph_{\omega}\)
is descendable;
see~\cite[Proposition~3.32]{Mathew16}.
Therefore, constructing a counterexample is challenging,
since algebro-geometric counterexamples,
even those not of finite type,
are typically of countable presentation.
Nevertheless,
\cref{main} gives us an optimal counterexample:

\begin{Theorem}\label{main_des}
  There is a faithfully flat map
  \(B\to B'\)
  between Boolean rings
  of cardinality~\(\aleph_{\omega}\)
  that is not descendable.
\end{Theorem}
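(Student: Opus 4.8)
The plan is to combine \cref{main} with the yoga of descendability (after Mathew): a faithfully flat map of commutative rings that is descendable has some finite \emph{index}, and that index bounds the length of a nonvanishing product of cohomology classes killed by base change; since \cref{main} produces, for every $n$, a space with a nonzero $(n+1)$-fold product of such classes, no single faithfully flat map can simultaneously witness all of these, so assembling the $n$-th situations over all $n$ will yield a non-descendable example.

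First, the local input. Fix $n$ and write $U_n=S_n\setminus\{x_n\}$ for the locally profinite set of \cref{main}, with $S_n$ profinite, $\lvert U_n\rvert=\aleph_{2n+1}$, and classes $\eta_0,\dots,\eta_n\in H^1(U_n;\FF_2)$ with $\eta_0\dotsb\eta_n\neq0$. Each $\eta_i$ classifies an $\FF_2$-torsor, i.e.\ a double cover of $U_n$; let $W_n\to U_n$ be the fiber product of these $n+1$ covers, a finite covering space, again locally profinite, over which every $\eta_i$ restricts to $0$ (a torsor pulled back to itself has a section). The one-point compactification $\bar W_n:=W_n^+$ is again profinite, and the covering map extends to a surjection $\pi_n\colon\bar W_n\to S_n$ which is the finite cover over $U_n$ and a single point over $x_n$; moreover $\bar W_n$ has weight $\le\aleph_{2n+1}$. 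Put $B_n:=C(S_n;\FF_2)$ and $B'_n:=C(\bar W_n;\FF_2)$; the induced map $B_n\to B'_n$ is faithfully flat, because every module over a Boolean ring is flat and $\Spec B'_n\to\Spec B_n$ is the surjection $\pi_n$.

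Next I translate this into $\D(B_n)$. Under the symmetric monoidal equivalence $\D(B_n)\simeq\D(\Shv(S_n;\FF_2))$ sending $B_n$ to the constant sheaf and realizing $R\Gamma(S_n;-)$, which is exact since profinite sets have cohomological dimension $0$: the $\E_\infty$-$B_n$-algebra $R:=R\Gamma(U_n;\FF_2)$ (with $\pi_{-*}R=H^*(U_n;\FF_2)$ and its cup product) corresponds to $Rj_*\underline{\FF_2}_{U_n}$ for $j\colon U_n\hookrightarrow S_n$, while $B'_n$ corresponds to $\pi_{n*}\underline{\FF_2}_{\bar W_n}$. A projection-formula and proper-base-change computation — valid because $\pi_n$ is proper of cohomological dimension $0$ — then gives $R\otimes_{B_n}B'_n\simeq R\Gamma(W_n;\FF_2)$ as $\E_\infty$-rings, with the unit $R\to R\otimes_{B_n}B'_n$ corresponding to pullback along $W_n\to U_n$; in particular each $\eta_i\in\pi_{-1}R$ maps to $0$ in $\pi_{-1}(R\otimes_{B_n}B'_n)$. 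Now suppose $B_n\to B'_n$ were descendable of index $\le N$. Base change along $B_n\to R$ shows $R\to R\otimes_{B_n}B'_n$ is descendable of index $\le N$; since the associated (cobar) filtration on $\pi_*R$ is multiplicative, a class killed by base change has filtration $\ge1$, and index $\le N$ forces the filtration-$\ge N$ part of $\pi_*R$ to vanish, so any product of $N$ classes killed by base change is $0$. Applied to $\eta_0,\dots,\eta_n$ this gives $\eta_0\dotsb\eta_n=0$ in $H^{n+1}(U_n;\FF_2)$ as soon as $N\le n+1$, contradicting \cref{main}. Hence $B_n\to B'_n$ is not descendable of index $\le n+1$.

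Finally, globalize: let $X:=(\bigsqcup_n S_n)^+$ and $X':=(\bigsqcup_n\bar W_n)^+$ be the one-point compactifications of the coproducts (both profinite), with the surjection $X'\to X$ extending the $\pi_n$ by $\infty\mapsto\infty$, and set $B:=C(X;\FF_2)$, $B':=C(X';\FF_2)$. As before $B\to B'$ is faithfully flat, and both rings have cardinality $\sum_n\aleph_{2n+1}=\aleph_\omega$; to make this exact (rather than an upper bound) one arranges $S_n$ to have weight $\aleph_{2n+1}$, replacing $S_n$ and $\bar W_n$ if necessary by disjoint unions with a fixed profinite set of weight and cardinality $\aleph_{2n+1}$, e.g.\ the ordinal $\omega_{2n+1}+1$. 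Each $S_n$ is clopen in $X$, cut out by an idempotent $e_n\in B$ with $e_nB=B_n$ and $e_nB'=B'_n$, so $B_n\to B'_n$ is the base change of $B\to B'$ along $B\to e_nB$; if $B\to B'$ were descendable it would have finite index $N$, forcing every $B_n\to B'_n$ to be descendable of index $\le N$, which fails for $n\ge N$. Thus $B\to B'$ is faithfully flat but not descendable, as desired. The main obstacle I anticipate is the third step — making the passage between the cup-product structure on $H^*(U_n;\FF_2)$ and the $\E_\infty$-$B_n$-algebra $R$ precise, and establishing $R\otimes_{B_n}C(\bar W_n;\FF_2)\simeq R\Gamma(W_n;\FF_2)$ compatibly with products, so that Mathew's index estimates genuinely apply; a secondary nuisance is the cardinality bookkeeping needed to land exactly on $\aleph_\omega$.
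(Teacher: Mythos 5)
There is a genuine gap, and it sits exactly where you anticipated: the identification \(R\otimes_{B_n}B'_n\simeq R\Gamma(W_n;\FF_2)\) is false, and with it the claim that the \(\eta_i\) are killed by base change along \(B_n\to B'_n\). Since \(B'_n\) is flat over \(B_n\), tensoring the Čech complex computing \(R\) termwise gives \(\pi_{-*}(R\otimes_{B_n}B'_n)\simeq H^{*}(U_n;\FF_2)\otimes_{B_n}B'_n\). Now every class in \(H^{\geq1}(U_n;\FF_2)\) is annihilated by the ideal \(J_n\subset B_n\) of functions vanishing at infinity: for \(a\in J_n\) (an idempotent, as \(B_n\) is Boolean) the set \(\{a=1\}\) is a compact open, hence profinite, subset of \(U_n\), so \(a\cdot\eta\) vanishes both there and on the complementary clopen. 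Thus \(H^{1}(U_n;\FF_2)\) is a module over \(B_n/J_n=\FF_2\); and since \(\pi_n\) has a single point over infinity, one checks \(J_nB'_n\) is exactly the ideal of functions on \(\bar W_n\) vanishing at infinity, so \(B'_n/J_nB'_n=\FF_2\) as well and \(\eta\mapsto\eta\otimes1\) is the \emph{identity} map \(H^{1}(U_n;\FF_2)\to H^{1}(U_n;\FF_2)\otimes_{B_n}B'_n\). No class dies, and the cobar--filtration argument gets no traction. The underlying error is the base-change step \(\pi_n^{*}Rj_{*}\FF_2\simeq Rj'_{*}\FF_2\): the stalk at infinity of the left side is \(\varinjlim_VR\Gamma(V\cap U_n;\FF_2)\) over neighborhoods \(V\) of \(x_n\), that of the right side is \(\varinjlim_VR\Gamma(\pi_n^{-1}(V\cap U_n);\FF_2)\), and these differ because the fiber of \(\pi_n\) collapses from \(2^{n+1}\) points to one precisely at infinity. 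Topological pullback to the covering space and algebraic base change along \(B_n\to B'_n\) are not the same operation here, because the cohomology lives entirely ``at the missing point'', where your cover is trivial.

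The strategy is salvageable, and the repaired version lands close to the paper's actual argument. The correct home for the obstruction is \(\Ext^{*}_{B_n}(J_n,\X)\) with the \emph{coefficient} variable base-changed: one does have \(\Ext^{*}_{B_n}(J_n,B'_n)\simeq H^{*}(W_n;\FF_2)\) (here \(J_n=j_{!}\FF_2\) and \(j^{*}\pi_{n*}\FF_2\) is the pushforward along the finite covering \(W_n\to U_n\)), the map from \(\Ext^{*}_{B_n}(J_n,B_n)\simeq H^{*}(U_n;\FF_2)\) is the pullback, and it genuinely kills the \(\eta_i\). Lifting each \(\eta_i\colon J_n[-1]\to B_n\) through \(\fib(B_n\to B'_n)\) and using \(J_n^{\otimes_{B_n}N}\simeq J_n\) then shows that descendability of index \(\leq n+1\) would force \(\eta_0\dotsb\eta_n=0\); this is the mechanism of \cref{xxw7dx}. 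The paper instead manufactures \(B'\) directly from \(\eta\in\Ext^{1}_{B}(J^{\oplus(n+1)},B)\) via de Jong's deformed symmetric algebra and its Boolean quotient (\cref{deformed,xbx6x1}); your compactified covering space is a more geometric alternative source of such a \(B'\), but only once the obstruction is run through \(\Ext\) coefficients rather than through \(\pi_{*}(R\otimes_{B_n}B'_n)\). As written, the proof does not go through.
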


\begin{remark}\label{xb0wxc}
While this paper was being written,
  Zelich~\cite{Zelich2} independently\footnote{However, note that
    the first version of~\cite{Zelich2}
    was the starting point of this paper.
  } constructed
  an example of a nondescendable faithfully flat ring map.
  The example there is more of an algebro-geometric flavor;
  the source ring
  is the polynomial ring with countably many variables
  over any algebraically closed field
  of cardinality~\(\beth_{\omega}\).
\end{remark}

Since our counterexample is Boolean,
we can deduce another negative implication
in Clausen--Scholze's condensed mathematics~\cite{Condensed}.
For an infinite cardinal~\(\kappa\),
we write \(\Cat{PFin}_{\kappa}\) 
the category of profinite sets of weight\footnote{The \emph{weight} of a topological space
  is
  the smallest cardinality of a basis.
}~\(<\kappa\).
A \(\kappa\)-condensed object is a hypercomplete sheaf
on~\(\Cat{PFin}_{\kappa}\)
with topology generated by finite jointly surjective families.
When \(\kappa\) is a strong limit cardinal
such as~\(\beth_{\omega}\),
we can describe it in terms
of extremally disconnected sets instead,
and hence it is much more tractable.
However,
in~\cite{AnaSta},
they favored the choice \(\kappa=\aleph_{1}\)
and called it light condensed mathematics.
One reason is that it suffices for everyday mathematical practice
(which this paper does not exemplify),
but there are nonpragmatic reasons as well;
one of them is the Betti stack construction:
In~\cite[Lecture~19]{AnaSta},
they constructed a functor
from light condensed animas
to (light) analytic stacks.
The key point is that
\begin{equation}
  \label{e:sg98b}
  \Mod_{\Shv(\X;\D(\ZZ))}(\Cat{Pr})
  \colon\Cat{PFin}_{\aleph_{1}}^{\op}\to\Cat{CAT}
\end{equation}
is a hypersheaf,
where \(\Cat{Pr}\)
and \(\Cat{CAT}\)
denote the large \(\infty\)-category
of presentable \(\infty\)-categories
and the very large \(\infty\)-category
of large \(\infty\)-categories,
respectively.
The proof relies on the lightness of the setting,
so it is natural to ask whether it fails in the nonlight case.
Our counterexample,
which is optimal,
shows that the smallness of~\(\kappa\) is indeed crucial:

\begin{Theorem}\label{main_b}
  The functor
  \(
    \Mod_{\Shv(\X;\D(\FF_{2}))}(\Cat{Pr})
    \colon\Cat{PFin}_{\aleph_{\omega+1}}^{\op}\to\Cat{CAT}
  \)
  does not satisfy (Čech) descent.
\end{Theorem}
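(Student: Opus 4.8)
The plan is to deduce this from \cref{main_des} by Stone duality. Let $B\to B'$ be the faithfully flat nondescendable map of Boolean rings of cardinality~$\aleph_{\omega}$ furnished by \cref{main_des}, and set $S=\Spec B$, $S'=\Spec B'$; these are profinite sets of weight at most~$\aleph_{\omega}$, hence objects of~$\Cat{PFin}_{\aleph_{\omega+1}}$. As every module over a Boolean ring is flat, faithful flatness of $B\to B'$ amounts to surjectivity of the induced map $f\colon S'\to S$, so $\{f\}$ is a covering family for the topology on~$\Cat{PFin}_{\aleph_{\omega+1}}$. Its Čech nerve has $n$-th term $S'\times_{S}\dotsb\times_{S}S'=\Spec((B')^{\otimes_{B}(n+1)})$, and each $(B')^{\otimes_{B}(n+1)}$ is again a Boolean ring of cardinality at most~$\aleph_{\omega}$ (in characteristic~$2$ a sum of idempotents is idempotent), so the whole Čech nerve lands in~$\Cat{PFin}_{\aleph_{\omega+1}}$. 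It therefore suffices to show that $\Mod_{\Shv(\X;\D(\FF_{2}))}(\Cat{Pr})$ does not carry this Čech nerve to a limit diagram.

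Next I would translate the categories into ring theory. For any Boolean ring~$C$ there is a symmetric monoidal equivalence $\Shv(\Spec C;\D(\FF_{2}))\simeq\D(C)$ with the derived $\infty$-category of $C$-modules, natural in~$C$: on hearts it is the identification of sheaves of $\FF_{2}$-vector spaces on the Stone space~$\Spec C$ with $C$-modules (the clopens form a basis and are exactly the idempotents of~$C$, and the sheaf condition for a finite disjoint clopen cover is the direct-sum decomposition along a finite orthogonal family of idempotents); it passes to the derived, hypercomplete level because profinite sets have $\FF_{2}$-cohomological dimension~$0$, so Postnikov towers converge; and it is monoidal because the pointwise tensor product of sheaves is computed stalkwise over~$\FF_{2}$, which matches $\otimes_{C}$ since $C_{\mathfrak{m}}=\FF_{2}$ at every maximal ideal~$\mathfrak{m}$. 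Applying the construction $\mathcal{A}\mapsto\Mod_{\mathcal{A}}(\Cat{Pr})$ to the Čech nerve of~$f$ then yields the augmented cosimplicial diagram
\[
  \Mod_{\D(B)}(\Cat{Pr})\longrightarrow\Tot\bigl(\Mod_{\D((B')^{\otimes_{B}(\bullet+1)})}(\Cat{Pr})\bigr),
\]
and Čech descent along~$f$ is exactly the assertion that this augmentation is an equivalence.

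Finally I would invoke Mathew's theory of descendability~\cite{Mathew16}: the map $B\to B'$ is descendable precisely when the unit of~$\D(B)$ lies in the thick $\otimes$-ideal generated by~$B'$, and this is equivalent to the Čech nerve of~$B'$ inducing a limit diagram of module $\infty$-categories in~$\Cat{Pr}$ (equivalently in~$\Cat{CAT}$). As $B\to B'$ is not descendable by \cref{main_des}, the displayed augmentation is not an equivalence, so $\Mod_{\Shv(\X;\D(\FF_{2}))}(\Cat{Pr})$ fails Čech descent on~$\Cat{PFin}_{\aleph_{\omega+1}}^{\op}$. The step demanding the most care is this last equivalence between nondescendability and the failure of $\Cat{Pr}$-valued Čech descent. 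One direction---that descendable maps satisfy this form of descent---is Mathew's theorem, and it is precisely what makes the light condensed setting recalled in~\cref{s:intro} work. The other direction is the one we need, and it is genuinely more delicate: ordinary (derived) faithfully flat descent already yields $\D(B)\simeq\Tot\D((B')^{\otimes_{B}(\bullet+1)})$, so the counterexample really uses the additional categorical level, and one must show that a proper thick $\otimes$-ideal containing~$B'$ prevents the only conditionally convergent totalization from surviving the passage to presentable linear categories. This is where the full force of~\cref{main_des} enters.
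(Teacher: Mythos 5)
Your proposal is correct and follows essentially the same route as the paper (\cref{xyhogu}): take the map \(B\to B'\) of \cref{main_des}, observe that \(\Spec B'\to\Spec B\) is a cover in \(\Cat{PFin}_{\aleph_{\omega+1}}\) whose Čech nerve stays in that category, identify \(\Shv(\Spec C;\D(\FF_{2}))\) with \(\D(C)\), and conclude from nondescendability. The one step you flag as ``genuinely more delicate''---that an equivalence \(\Mod_{\D(B)}(\Cat{Pr})\simeq\Tot\bigl(\Mod_{\D((B')^{\otimes_{B}(\bullet+1)})}(\Cat{Pr})\bigr)\) would force \(B\to B'\) to be descendable---is exactly \cite[Proposition~3.43]{Mathew16} (attributed there to Lurie), so it can simply be cited rather than left as something still to be shown.
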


Finally,
we comment on our generalities
in this paper:

\begin{remark}\label{x2z0rb}
  By a slight modification of our arguments,
  we can replace our coefficients~\(\FF_{2}\) with any nontrivial ring
  in \cref{main_w,main}.
  As a consequence,
  for any prime~\(p\),
  we can use \(p\)-Boolean rings
  in \cref{main_si,main_des}
  and replace~\(\FF_{2}\) with~\(\FF_{p}\) in \cref{main_b}.
  However,
  to keep the exposition simple,
  we stick to the case of~\(\FF_{2}\).
\end{remark}

\subsection*{Organization}\label{ss:outline}

In \cref{s:main},
we study topology
and prove \cref{main_w,main}.
In \cref{s:app},
we explain how to deduce \cref{main_si,main_des,main_b}
from our counterexamples.

\subsection*{Acknowledgments}\label{ss:ack}

In the first version of~\cite{Zelich2},
Zelich attempted to construct an example
of a faithfully flat map that is not descendable
in the Boolean setting.
Although it turned out to contain mistakes,
it motivated me to study the subject of this paper.

I thank
Benjamin Antieau,
Robert Burklund,
Ishan Levy,
Peter Scholze,
and
Germán Stefanich
for
helpful discussions.
I thank Scholze for useful comments on a draft.
I thank the Max Planck Institute for Mathematics
for its financial support.

\section{Cohomology of locally profinite sets}\label{s:main}

We construct a space and a class
for \cref{main_w}
in \cref{ss:plank}
and prove that it is nonzero in \cref{ss:higher}.
Then we prove \cref{main} in \cref{ss:nn}.

\subsection{Construction}\label{ss:plank}

\begin{definition}\label{x8x476}
  For a nonempty sequence of ordinals \(\alpha=(\alpha_{0},\ldots,\alpha_{n})\),
  we write~\(X(\alpha)\)
  for the product
  \((\aleph_{\alpha_{0}})^{+}\times\dotsb\times(\aleph_{\alpha_{n}})^{+}\),
  where \((\X)^{+}\)
  denotes the one-point compactification of the discrete topological space.
  We write \(X(\alpha)^{-}\)
  for the space obtained by removing the point at infinity.
\end{definition}

\begin{remark}\label{xy2pou}
  In \cref{x8x476},
  both the cardinality and weight of~\(X(\alpha)\)
  are \(\aleph_{\max_{i}\alpha_{i}}\).
\end{remark}

\begin{example}\label{x2udh4}
Let \(\beta\) be the ordinal satisfying \(2^{\aleph_{0}}=\aleph_{\beta}\).
  The space \(X(0,\beta)\) is commonly called the \emph{Thomas plank},
  since it was a building block
  in the construction of Thomas~\cite{Thomas69}.
\end{example}

\begin{example}\label{xm122r}
Wiegand~\cite[Example~3]{Wiegand68}
  noted that \(X(0,1)^{-}\)
  has nontrivial first \(\FF_{2}\)-cohomology.
  More generally,
  he considered the space \(X(0,\ldots,n)^{-}\)
  for \(n\geq0\)
  in the proof of~\cite[Theorem~4.1]{Wiegand69I}.
  In \cref{xqxs5v},
  we prove that its \(n\)th \(\FF_{2}\)-cohomology
  is nontrivial.
\end{example}

The cohomology of~\(X(\alpha)^{-}\)
has a simple combinatorial description:

\begin{proposition}\label{x22ju6}
  Let \(\alpha=(\alpha_{0},\ldots,\alpha_{n})\)
  be a nonempty sequence of ordinals.
  We write \(U_{i}\subset X(\alpha)^{-}\) for the subset consisting of the points
  whose \(i\)th coordinate is not the point at infinity.
  Then the \(\FF_{2}\)-cohomology of~\(X(\alpha)^{-}\)
  is computed as the oriented Čech complex
  whose \(k\)th term is
  \begin{equation*}
    \prod_{0\leq i_{0}<\dotsb<i_{k}\leq n}
    \Cls{C}(U_{i_{0}}\cap\dotsb\cap U_{i_{k}};\FF_{2}),
  \end{equation*}
  where we write~\(\Cls{C}\) for the ring of continuous functions.
  In particular,
  \(H^{*}(X(\alpha)^{-};\FF_{2})=0\)
  for \({*}>n\).
  Moreover,
  an \(n\)-cocycle is a function
  \(f\colon\aleph_{\alpha_{0}}\times\dotsb\times\aleph_{\alpha_{n}}\to\FF_{2}\),
  and it is a coboundary
  if and only if
  it can be written as a sum \(f_{0}+\dotsb+f_{n}\)
  with~\(f_{i}\) almost constant in the \(i\)th coordinate;
  i.e.,
  for any \((x_{0},\ldots,x_{i-1},x_{i+1},\ldots,x_{n})
  \in\aleph_{\alpha_{0}}\times\dotsb\times\aleph_{\alpha_{i-1}}
  \times\aleph_{\alpha_{i+1}}\times\dotsb\times\aleph_{\alpha_{n}}\),
  the function
  \(x_{i}\mapsto f_{i}(x_{0},\ldots,x_{n})\)
  is constant up to a finite set.
\end{proposition}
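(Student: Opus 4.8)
The plan is to identify $X(\alpha)^-$ with the interior of a product of one-point compactifications and to build an explicit good cover whose nerve is the $n$-simplex, so that Čech cohomology computes the answer. First I would observe that $X(\alpha)^- = U_0 \cup \dotsb \cup U_n$, where $U_i$ is the locus where the $i$th coordinate is finite; each $U_i$ is open, and a nonempty intersection $U_{i_0} \cap \dotsb \cap U_{i_k}$ is homeomorphic to a product of $k+1$ discrete spaces with a product of one-point compactifications, hence is a locally profinite set that is moreover \emph{paracompact} and has vanishing higher cohomology (it is, in each such chart, a product of a discrete set with a profinite set; profinite sets have cohomological dimension $0$, and a discrete factor contributes nothing to sheaf cohomology with the given constant coefficients — this needs a short argument, e.g. via the fact that such a space is an increasing union of clopen profinite pieces, or directly that it is zero-dimensional and paracompact so $\dim = 0$). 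Granting that, the cover $\{U_i\}_{i=0}^n$ is a \emph{good cover}, and since it is finite and $X(\alpha)^-$ is paracompact, the Čech-to-derived-functor spectral sequence degenerates: $H^*(X(\alpha)^-;\FF_2)$ is the cohomology of the Čech complex $\check C^\bullet(\{U_i\}; \FF_2)$, whose $k$th term is $\prod_{i_0<\dotsb<i_k} \Cls C(U_{i_0}\cap\dotsb\cap U_{i_k};\FF_2)$ because $H^0$ of a space with this coefficient sheaf is exactly its ring of continuous (= locally constant) $\FF_2$-valued functions. This is the "oriented Čech complex" in the statement, and since the nerve has dimension $n$, the complex vanishes above degree $n$, giving $H^*(X(\alpha)^-;\FF_2) = 0$ for $*>n$.

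Next I would unwind the top degree explicitly. The $n$-fold intersection $U_0 \cap \dotsb \cap U_n$ is precisely the discrete space $\aleph_{\alpha_0}\times\dotsb\times\aleph_{\alpha_n}$, so an $n$-cochain is just a function $f\colon \aleph_{\alpha_0}\times\dotsb\times\aleph_{\alpha_n}\to\FF_2$; it is automatically an $n$-cocycle since $\check C^{n+1}=0$. An $(n-1)$-cochain is a tuple $(g_i)_{i=0}^n$ with $g_i \in \Cls C(U_0\cap\dotsb\cap\widehat{U_i}\cap\dotsb\cap U_n;\FF_2)$, and the relevant intersection is $\aleph_{\alpha_0}\times\dotsb\times (\aleph_{\alpha_i})^+\times\dotsb\times\aleph_{\alpha_n}$ (one compactified factor), so $g_i$ is a continuous $\FF_2$-valued function there; the coboundary sends $(g_i)$ to $\sum_i (-1)^i \, g_i|_{U_0\cap\dotsb\cap U_n}$ — and over $\FF_2$ the signs vanish, so $f$ is a coboundary iff $f = \sum_i f_i$ where $f_i$ is the restriction to the open dense discrete part of a continuous function on the space with the $i$th coordinate compactified. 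The last ingredient is the elementary topological lemma that a function $h\colon \aleph_{\alpha_0}\times\dotsb\times\aleph_{\alpha_n}\to\FF_2$ extends continuously over the one-point compactification in the $i$th coordinate if and only if, for every fixed value of the other coordinates, the slice $x_i \mapsto h(\dots,x_i,\dots)$ is eventually constant (i.e. constant outside a finite subset of $\aleph_{\alpha_i}$) \emph{and} that eventual value, as a function of the remaining coordinates, is itself continuous — but since the other factors in that slice-target are already discrete the continuity is automatic, so the condition reduces exactly to "almost constant in the $i$th coordinate." Assembling these identifications yields the stated description of $n$-coboundaries.

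I expect the main obstacle to be the cohomological-dimension-zero claim for the partial intersections $U_{i_0}\cap\dotsb\cap U_{i_k}$, i.e. verifying that they really form a \emph{good} cover so the Čech complex computes honest cohomology. The subtlety is that these spaces are not compact — they are products of discrete factors with profinite factors — so one cannot merely cite "profinite sets have cohomological dimension $0$"; one needs that a (possibly large) disjoint-union-type space built from profinite pieces, or more precisely a paracompact zero-dimensional space, has vanishing higher sheaf cohomology with the constant sheaf $\FF_2$. The clean way is: each $U_{i_0}\cap\dotsb\cap U_{i_k}$ is paracompact (a product of a locally compact, $\sigma$-compact-in-each-discrete-coordinate space — actually one should note discrete spaces are paracompact and a product of a paracompact space with a compact space is paracompact, and handle the general product with care) and has a basis of clopen sets, hence $\operatorname{covering}\,\dim = 0$, hence $H^{>0}(\,\cdot\,;\FF_2)=0$ by the standard comparison of sheaf cohomology with Čech cohomology on paracompact spaces. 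Once this is in hand, everything else is bookkeeping with the Čech differential and the one elementary extension lemma.
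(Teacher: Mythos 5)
Your proof is correct and follows essentially the same route as the paper: cover \(X(\alpha)^{-}\) by the \(U_{i}\), observe that every finite intersection is acyclic so that the Čech complex of this cover computes the cohomology, and then unwind the top-degree coboundary condition via continuous extension over the compactified coordinate. The paper's one-line justification for acyclicity is simply that each intersection is a disjoint union of profinite sets (the discrete coordinates index the pieces), which also renders your paracompactness worries unnecessary, since the Čech-to-derived-functor spectral sequence for a single finite open cover with acyclic intersections needs no such hypothesis.
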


\begin{proof}
  We see that
  \(U_{i_{0}}\cap\dotsb\cap U_{i_{k}}\)
  for \(0\leq i_{0}<\dotsb<i_{k}\leq n\)
  is a disjoint union
  of profinite sets,
  and hence it does not have higher cohomology.
  The rest is immediate.
\end{proof}

\begin{remark}\label{xcxc8o}
  As shown in \cref{x22ju6},
  we can concretely study the highest cohomology group.
  This is the advantage of this construction
  in contrast to the approach of Pierce~\cite[Section~5]{Pierce67}.
\end{remark}

We then construct a typical top cohomology class:

\begin{definition}\label{xayddy}
  We fix a nonempty sequence of ordinals
  \(\alpha=(\alpha_{0},\ldots,\alpha_{n})\)
  and a decomposition
  \(\aleph_{\alpha_{i}}=E_{i}\amalg F_{i}\)
  with \(\lvert E_{i}\rvert=\lvert F_{i}\rvert\)
  for each~\(i\).
  We consider the function
  \(e\colon\aleph_{\alpha_{0}}\times\dotsb\times\aleph_{\alpha_{n}}\to\FF_{2}\)
  given by
  \begin{equation*}
    e(x_{0},\ldots,x_{n})
    =
    \begin{cases}
      1&\text{if \(x_{i}\in E_{i}\) for all~\(i\),}\\
      0&\text{otherwise.}
    \end{cases}
  \end{equation*}
  We write~\(\epsilon\) for the class
  in \(H^{n}(X(\alpha)^{-};\FF_{2})\)
  corresponding to~\(e\) via \cref{x22ju6}.
\end{definition}

\subsection{Nontriviality}\label{ss:higher}

\Cref{main_w} follows from the following:

\begin{theorem}\label{xqxs5v}
  The class~\(\epsilon\in H^{n}(X(\alpha)^{-};\FF_{2})\)
  defined in \cref{xayddy}
  is nonzero
  when \(\alpha_{i}<\alpha_{i+1}\)
  for \(0\leq i<n\).
\end{theorem}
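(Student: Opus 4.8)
The plan is to induct on~\(n\) and reduce everything to the combinatorial description of \cref{x22ju6}. By that proposition, \(\epsilon=0\) is equivalent to the existence of a decomposition \(e=f_{0}+\dotsb+f_{n}\) of the function~\(e\) of \cref{xayddy} into functions \(\aleph_{\alpha_{0}}\times\dotsb\times\aleph_{\alpha_{n}}\to\FF_{2}\) with each \(f_{i}\) almost constant in the \(i\)th coordinate, and the task is to rule this out. The base case \(n=0\) is immediate: there \(\epsilon\) is the class of the indicator of~\(E_{0}\) in \(H^{0}(X(\alpha_{0})^{-};\FF_{2})=\Cls{C}(\aleph_{\alpha_{0}};\FF_{2})\), which is nonzero since \(\lvert E_{0}\rvert=\lvert F_{0}\rvert\) with \(\aleph_{\alpha_{0}}\) infinite forces \(E_{0}\neq\emptyset\).

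For the inductive step take \(n\geq1\), assume such a decomposition \(e=f_{0}+\dotsb+f_{n}\) exists, and aim to contradict the case \(n-1\) for the sequence \((\alpha_{0},\dotsc,\alpha_{n-1})\), which still satisfies \(\alpha_{0}<\dotsb<\alpha_{n-1}\). The idea is to eliminate the last coordinate by evaluating at two well-chosen points of~\(\aleph_{\alpha_{n}}\). For each \(\vec{x}\in\aleph_{\alpha_{0}}\times\dotsb\times\aleph_{\alpha_{n-1}}\), let \(A(\vec{x})\subset\aleph_{\alpha_{n}}\) be the finite set of points where \(x_{n}\mapsto f_{n}(\vec{x},x_{n})\) differs from its eventual value. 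Since \(\alpha_{0}\leq\dotsb\leq\alpha_{n-1}\), the index set has cardinality \(\aleph_{\alpha_{n-1}}\), so
\begin{equation*}
  \biggl\lvert\,\bigcup_{\vec{x}}A(\vec{x})\,\biggr\rvert
  \leq\aleph_{\alpha_{n-1}}\cdot\aleph_{0}=\aleph_{\alpha_{n-1}}<\aleph_{\alpha_{n}},
\end{equation*}
the strict inequality being exactly the hypothesis \(\alpha_{n-1}<\alpha_{n}\). Calling this union~\(Z\) and noting \(\lvert E_{n}\rvert=\lvert F_{n}\rvert=\aleph_{\alpha_{n}}>\lvert Z\rvert\) (both equal \(\aleph_{\alpha_{n}}\) because \(\aleph_{\alpha_{n}}\) is infinite), we may pick \(a\in E_{n}\setminus Z\) and \(b\in F_{n}\setminus Z\); then \(f_{n}(\vec{x},a)=f_{n}(\vec{x},b)\) for all~\(\vec{x}\). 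Setting \(h_{i}(\vec{x})=f_{i}(\vec{x},a)+f_{i}(\vec{x},b)\) for \(0\leq i<n\) and adding the identities \(e(\vec{x},a)=\sum_{i}f_{i}(\vec{x},a)\) and \(e(\vec{x},b)=\sum_{i}f_{i}(\vec{x},b)\), the \(i=n\) terms cancel, so \(\vec{x}\mapsto e(\vec{x},a)+e(\vec{x},b)\) equals \(h_{0}+\dotsb+h_{n-1}\), with each \(h_{i}\) still almost constant in the \(i\)th coordinate (a sum of two such). But since \(a\in E_{n}\) and \(b\in F_{n}\), this function is precisely the one built as in \cref{xayddy} from \((\alpha_{0},\dotsc,\alpha_{n-1})\) and the decompositions \(E_{i}\amalg F_{i}\), \(i<n\). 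Hence \cref{x22ju6} exhibits that function as a coboundary, contradicting the inductive hypothesis.

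The only substantive ingredient is the displayed cardinality bound, which is the sole place the strict inequalities \(\alpha_{i}<\alpha_{i+1}\) are used; everything else is bookkeeping against \cref{x22ju6}. The one verification I would be careful about is that the \(h_{i}\) stay almost constant in the \(i\)th coordinate---but this is immediate, since freezing the \(n\)th coordinate to~\(a\) or~\(b\) and all coordinates \(\neq i\) among \(0,\dotsc,n-1\) leaves \(f_{i}\) eventually constant in \(x_{i}\) by hypothesis, and a sum of two eventually constant functions is eventually constant.
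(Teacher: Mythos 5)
Your proof is correct and is essentially the paper's own argument: the paper isolates your cardinality step as a separate lemma (\cref{pq}, producing the co-small set $Q'$ that plays the role of your $\aleph_{\alpha_{n}}\setminus Z$) and then evaluates at two points $s\in E_{n}$, $t\in F_{n}$ of that set exactly as you do with $a$ and $b$. The only differences are organizational, so there is nothing to change.
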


\begin{remark}\label{xgwz1r}
The case where \(\alpha_{i}=i\) in \cref{xqxs5v}
  implies the generalization of \cref{xqxs5v}
  in which only \(\alpha_{i}\geq i\) is assumed;
  e.g., \(\epsilon\) is nonzero for the constant sequence \(\alpha_{i}=n\).
  However, this asymmetry makes the proof clearer.
\end{remark}

The following trivial observation is the key in the proof:

\begin{lemma}\label{pq}
  Let \(P\) and \(Q\) be infinite sets
  and \(f\colon P\times Q\to\FF_{2}\)
  a function
  that is almost constant in the second factor.
  Then there is \(Q'\subset Q\)
  with \(\lvert Q\setminus Q'\rvert\leq\lvert P\rvert\)
  such that \(f\restriction P\times Q'\)
  is constant in the second factor.
\end{lemma}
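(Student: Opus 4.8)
For each $p \in P$, the hypothesis gives a finite set $S_p \subset Q$ such that $q \mapsto f(p,q)$ is constant on $Q \setminus S_p$; call the eventual value $c(p) \in \FF_2$. The plan is to discard, for every $p$, the finitely many "exceptional" elements of $Q$ at once. Concretely, set $Q' = Q \setminus \bigcup_{p \in P} S_p$. Then $\lvert Q \setminus Q'\rvert = \lvert\bigcup_{p\in P} S_p\rvert \leq \lvert P\rvert \cdot \aleph_0 = \lvert P\rvert$, using that $P$ is infinite. For any $p \in P$ and any $q \in Q'$ we have $q \notin S_p$, so $f(p,q) = c(p)$; hence $f\restriction P \times Q'$ depends only on the first coordinate, i.e.\ is constant in the second factor.

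The only points to check are that $Q'$ is genuinely a subset of $Q$ of the stated cardinality and that it is nonempty (implicit in "constant in the second factor" being meaningful); the latter follows since $\lvert Q \setminus Q'\rvert \leq \lvert P\rvert < \lvert Q\rvert$ is \emph{not} automatic, but is not actually needed — the statement holds vacuously if $Q' = \emptyset$, and in the intended application $\lvert Q\rvert > \lvert P\rvert$ anyway. So no obstacle arises here; the cardinal arithmetic $\lvert P\rvert \cdot \aleph_0 = \lvert P\rvert$ for infinite $P$ is the only nontrivial ingredient, and it is standard ZFC (no choice-heavy machinery, just $\lvert P \times \omega\rvert = \lvert P\rvert$).

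If one wants to avoid even mentioning $\aleph_0$: each $S_p$ is finite, so $\bigcup_{p} S_p$ is a union of $\lvert P\rvert$ finite sets, which has cardinality at most $\lvert P\rvert$ since $P$ is infinite. This is the one-line proof, and I expect the author's argument to be essentially this.
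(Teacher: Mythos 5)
Your proof is correct and is essentially identical to the paper's: the author also takes, for each \(p\in P\), a finite exceptional set \(Q_p\subset Q\) and sets \(Q'=Q\setminus\bigcup_{p\in P}Q_p\), leaving the cardinality estimate \(\lvert\bigcup_p Q_p\rvert\leq\lvert P\rvert\) implicit. Your additional remarks on the cardinal arithmetic and on \(Q'\) possibly being empty are fine but not needed.
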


\begin{proof}
  For each \(p\in P\),
  take a finite subset~\(Q_{p}\subset Q\)
  such that \(f\restriction\{p\}\times(Q\setminus Q_{p})\)
  is constant.
  Then \(Q'=Q\setminus\bigcup_{p\in P}Q_{p}\) satisfies the condition.
\end{proof}

\begin{proof}[Proof of \cref{xqxs5v}]
  We proceed by induction on~\(n\).
  The statement is trivial for \(n=0\).
  We consider the case \(n>0\).
  We assume that \(e\) is a coboundary for a contradiction.
  Then we can write~\(e\) as \(e_{0}+\dotsb+e_{n}\)
  where each~\(e_{i}\) is almost constant in the \(i\)th coordinate.
  We apply \cref{pq} to~\(f=e_{n}\),
  \(P=\aleph_{\alpha_{0}}\times\dotsb\times\aleph_{\alpha_{n-1}}\),
  and \(Q=\aleph_{\alpha_{n}}=E_{n}\amalg F_{n}\).
  We then pick \(s\in E_{n}\cap Q'\) and \(t\in F_{n}\cap Q'\).
  For \(f\colon\aleph_{\alpha_{0}}\times\dotsb\times\aleph_{\alpha_{n}}\to\FF_{2}\),
  we define \(f'\colon\aleph_{\alpha_{0}}\times\dotsb\times\aleph_{\alpha_{n-1}}\to\FF_{2}\)
  by
  \begin{equation*}
    f'(x_{0},\ldots,x_{n-1})
    =
    f(x_{0},\ldots,x_{n-1},s)-f(x_{0},\ldots,x_{n-1},t).
  \end{equation*}
  Then we have \(e'=e_{0}'+\dotsb+e_{n-1}'\),
  since \(e_{n}'\) is zero
  by the definition of~\(s\) and~\(t\).
  This means that
  the \((n-1)\)-cocycle~\(e'\)
  on \(X(\alpha_{0},\ldots,\alpha_{n-1})\)
  is a coboundary,
  which contradicts our inductive hypothesis.
\end{proof}

\subsection{Non-nilpotence}\label{ss:nn}

We here use \cref{xqxs5v} to prove \cref{main}.
The main difficulty is to control the product structure.
The key observation is the following,
which is a pathological variant of
the isomorphism \(H^{n+1}((\RR^{2}\setminus\{0\})^{n+1})
\simeq H^{2n+1}(\RR^{2n+2}\setminus\{0\})\):

\begin{proposition}\label{x2gquz}
For a sequence of ordinals
  \(\alpha=(\alpha_{0},\ldots,\alpha_{2n+1})\),
  we write~\(U\) for the product
  \(X(\alpha_{0},\alpha_{1})^{-}\times\dotsb\times X(\alpha_{2n},\alpha_{2n+1})^{-}\).
  Then \(H^{*}(U;\FF_{2})=0\) for \({*}>n+1\),
  and we have a canonical isomorphism
  \(H^{n+1}(U;\FF_{2})\simeq H^{2n+1}(X(\alpha)^{-};\FF_{2})\).
\end{proposition}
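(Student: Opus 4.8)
The plan is to compute both sides using the oriented Čech complex from \cref{x22ju6} and produce an explicit chain-level identification. Write $U = \prod_{j=0}^{n} X(\alpha_{2j},\alpha_{2j+1})^{-}$, and for each factor let $V_{j,0}, V_{j,1} \subset X(\alpha_{2j},\alpha_{2j+1})^{-}$ be the two basic open subsets (points whose first, resp.\ second, coordinate is finite), so that $V_{j,0}\cup V_{j,1}$ is the whole factor and $V_{j,0}\cap V_{j,1}$ is (a disjoint union of) profinite sets. Pulling these back to $U$ gives an open cover of $U$ by the $2(n+1)$ sets $W_{j,\varepsilon}$, and one checks that every nonempty finite intersection of the $W_{j,\varepsilon}$ is again a disjoint union of profinite sets, hence cohomologically trivial, exactly as in \cref{x22ju6}: an intersection either omits both $W_{j,0}$ and $W_{j,1}$ for some $j$ (then it is empty after we also note it must be a proper intersection), or it includes at least one index from each factor. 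The subtlety is that, unlike the single-plank case, here the cover has two sets per factor, so the Čech complex of this cover is the tensor product over $j$ of the two-term complexes $[\Cls{C}(W_{j,0})\times\Cls{C}(W_{j,1}) \to \Cls{C}(W_{j,0}\cap W_{j,1})]$. I would instead phrase it as: the Čech complex computing $H^*(U)$ with respect to the cover $\{W_{j,\varepsilon}\}$ is the totalization of an $(n+1)$-fold multicomplex, and because each factor's plank is covered by exactly two opens whose intersection is $0$-dimensional, the totalization in each plank direction contributes a shift by $1$; summing, we land in degree $n+1$ and get vanishing above it.

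More concretely, here is the identification I would carry out. By \cref{x22ju6} applied factorwise, $H^{*}(X(\alpha_{2j},\alpha_{2j+1})^{-};\FF_{2})$ is the cohomology of the two-term complex $\Cls{C}(V_{j,0};\FF_2)\oplus\Cls{C}(V_{j,1};\FF_2) \to \Cls{C}(V_{j,0}\cap V_{j,1};\FF_2)$ concentrated in degrees $0,1$. Since cohomology commutes with the relevant products and each plank is a compact space with a finite good cover, a Künneth argument gives $H^{n+1}(U;\FF_2) \simeq \bigotimes_{j=0}^{n} H^{1}(X(\alpha_{2j},\alpha_{2j+1})^{-};\FF_2)$ and vanishing of $H^{*}(U;\FF_2)$ for ${*}>n+1$ (all lower-degree mixed terms involve an $H^0$, but the real content is just the top piece). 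On the other side, $H^{2n+1}(X(\alpha)^{-};\FF_2)$ by \cref{x22ju6} is the cokernel of the top differential in the oriented Čech complex on the $2n+2$ opens $U_0,\dots,U_{2n+1}$, i.e.\ functions $\aleph_{\alpha_0}\times\dotsb\times\aleph_{\alpha_{2n+1}}\to\FF_2$ modulo those that are sums $g_0+\dots+g_{2n+1}$ with $g_i$ almost constant in coordinate $i$. The claim is that this cokernel is naturally $\bigotimes_{j} H^1(X(\alpha_{2j},\alpha_{2j+1})^{-};\FF_2)$ as well: a function on the $(2n+2)$-fold product is, via the evident bijection, the same as an element of $\bigotimes_j \Cls{C}(\text{plank}_j$'s open part$)$ completed appropriately, and "almost constant in coordinate $i$" for the two indices $i=2j,2j+1$ matches precisely the image of the $j$th plank's degree-$1$ differential after tensoring. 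So both sides are the same tensor product of cokernels, and I would make the isomorphism canonical by exhibiting it at the cochain level: send a class represented by $f\colon \prod_i \aleph_{\alpha_i}\to\FF_2$ to the class in $H^{n+1}(U)$ represented by the cocycle that, on the chart $W_{0,0}\cap W_{1,1}\cap W_{2,0}\cap\dots$ (one fixed "diagonal" $(n+1)$-fold intersection, choosing the first open in each plank and then the second), restricts $f$ to the corresponding product of discrete parts—this is well-defined because the ambiguity in $f$ (the $g_i$'s) maps to the Čech coboundaries in $U$ under the factorwise identification of \cref{x22ju6}.

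The main obstacle is the bookkeeping in the Künneth/totalization step: I need that the "oriented Čech complex" of a product cover really is (quasi-isomorphic to) the tensor product of the factor Čech complexes, despite the infinite products $\Cls{C}(-)$ appearing in the terms and the fact that these are cochain complexes over $\FF_2$ with differentials that are not obviously flat-resolution-friendly. The clean way around this is to avoid Künneth for topological spaces entirely and argue directly on the combinatorial complexes: the oriented Čech complex of $X(\alpha)^{-}$ on $U_0,\dots,U_{2n+1}$ can be reindexed, grouping coordinates in pairs $\{2j,2j+1\}$, and shown to be the totalization of the $(n+1)$-fold tensor product (over $\FF_2$, which is a field, so no flatness issue) of the length-one plank complexes; then the top cohomology of a totalization of a multicomplex of two-term complexes over a field is just the tensor product of the individual top cohomologies, giving both the vanishing above degree $n+1$ and the isomorphism in degree $n+1$ simultaneously and canonically. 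I would spell out this reindexing as the core of the proof and relegate the identification "$H^{n+1}(U)$ is computed by that same totalized complex" to a second application of \cref{x22ju6} to the product cover, checking only that all nonempty intersections of the $W_{j,\varepsilon}$ are $0$-dimensional (disjoint unions of profinite sets), which is straightforward.
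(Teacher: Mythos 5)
There is a genuine gap, and it occurs at the step you yourself identify as "straightforward." The claim that every nonempty finite intersection of the opens \(W_{j,\varepsilon}\) is a disjoint union of profinite sets is false: an intersection that uses \emph{no} open from the \(j\)th factor contains the entire plank \(X(\alpha_{2j},\alpha_{2j+1})^{-}\) as a factor. For instance, the single open \(W_{0,0}=V_{0,0}\times\prod_{j\geq1}X(\alpha_{2j},\alpha_{2j+1})^{-}\) already appears in degree \(0\) of your Čech complex, is nonempty, and has nonvanishing higher cohomology (pull back a class from a plank factor); your parenthetical "(then it is empty\dots)" does not hold. Consequently the cover is not a Leray cover, its Čech complex does not compute \(H^{*}(U;\FF_{2})\), and that Čech complex is in any case \emph{not} the totalization of the \((n+1)\)-fold product of two-term complexes (the totalization only has terms choosing at least one open per factor, whereas the Čech complex has all \(2^{2n+2}-1\) nonempty subsets). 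The paper avoids this by working one level up, with sheaves rather than covers: the constant sheaf on each plank is \(\cofib((\FF_{2})_{V_{j}\cap W_{j}}\to(\FF_{2})_{V_{j}}\oplus(\FF_{2})_{W_{j}})\), and the constant sheaf on \(U\) is the external product \(\boxtimes_{j}\) of these. Expanding produces a finite complex of sheaves, each supported on a product that \emph{does} select at least one open per factor and is hence acyclic; its global-sections multicomplex is concentrated in degrees \(\leq n+1\), which gives the vanishing, and its top two terms coincide with the top two terms of the oriented Čech complex of \(X(\alpha)^{-}\) from \cref{x22ju6}, which gives the isomorphism of cokernels.

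The second problem is the Künneth step. The terms of all complexes in sight are rings \(\Cls{C}(\X;\FF_{2})\) of continuous functions on infinite products, i.e.\ completed tensor products: \(\FF_{2}^{D_{0}\times D_{1}}\) is strictly larger than \(\FF_{2}^{D_{0}}\otimes\FF_{2}^{D_{1}}\) for infinite discrete \(D_{i}\). So neither \(H^{n+1}(U;\FF_{2})\) nor \(H^{2n+1}(X(\alpha)^{-};\FF_{2})\) is the algebraic tensor product \(\bigotimes_{j}H^{1}(X(\alpha_{2j},\alpha_{2j+1})^{-};\FF_{2})\), and your proposed "clean way around" — reindexing the oriented Čech complex of \(X(\alpha)^{-}\) as the \((n+1)\)-fold tensor product over \(\FF_{2}\) of the plank complexes — restates exactly the identification you flagged as problematic (and is also wrong combinatorially, since that Čech complex contains terms indexed by subsets omitting both coordinates of some pair). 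Fortunately the proposition asserts only an isomorphism between the two groups, not a tensor-product formula: both are cokernels of maps into \(\Cls{C}(\aleph_{\alpha_{0}}\times\dotsb\times\aleph_{\alpha_{2n+1}};\FF_{2})\) from \(2n+2\) terms that match bijectively (omitting coordinate \(2j\) corresponds to choosing only \(W_{j}\) in the \(j\)th factor), so no Künneth argument is needed once the sheaf-level resolution is in place. Your explicit "restrict \(f\) to a diagonal chart" map is the right cochain-level picture, but it must be routed through that resolution rather than through the non-Leray cover.
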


\begin{proof}
We write~\(U\)
  for the locally profinite set on the right-hand side.
  For \(U_{i}=X(\alpha_{2i},\alpha_{2i+1})^{-}\),
  we write \(V_{i}\) and~\(W_{i}\)
  for the subset
  consisting of the points
  whose \(2i\)th and \((2i+1)\)st coordinates are not the points at infinity,
  respectively.
  Then the constant sheaf~\(\FF_{2}\in\Shv(U_{i};\D(\FF_{2}))\) is equivalent to
  \begin{equation*}
    \cofib\bigl((\FF_{2})_{V_{i}\cap W_{i}}\to(\FF_{2})_{V_{i}}\oplus(\FF_{2})_{W_{i}}\bigr);
  \end{equation*}
  cf.~\cref{x22ju6}.
  Therefore, the constant sheaf \(\FF_{2}\in\Shv(U;\D(\FF_{2}))\) is equivalent to
  \begin{equation*}
    \cofib\bigl((\FF_{2})_{V_{0}\cap W_{0}}
    \to(\FF_{2})_{V_{0}}\oplus(\FF_{2})_{W_{0}}\bigr)
    \boxtimes\dotsb\boxtimes
    \cofib\bigl((\FF_{2})_{V_{n}\cap W_{n}}
    \to(\FF_{2})_{V_{n}}\oplus(\FF_{2})_{W_{n}}\bigr).
  \end{equation*}
  By comparing its mapping spectrum against~\(\FF_{2}\) with \cref{x22ju6},
  we get the desired result.
\end{proof}

We then obtain the precise version of \cref{main}:

\begin{theorem}\label{x728ui}
  For \(n\geq0\),
  we consider \(U=X(0,1)^{-}\times\dotsb\times X(2n,2n+1)^{-}\).
  We write \(\eta_{i}\) for
  the pullback of the class in \(H^{1}(X(2i,2i+1)^{-};\FF_{2})\)
  constructed in \cref{xayddy}.
  Then \(\eta_{0}\dotsb\eta_{n}\) is nonzero.
\end{theorem}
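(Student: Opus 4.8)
The plan is to combine \cref{x2gquz} with \cref{xqxs5v} by identifying the product class $\eta_0\dotsb\eta_n$ with (a scalar multiple of) the class $\epsilon$ on $X(0,1,\ldots,2n,2n+1)^-$, which is nonzero by \cref{xqxs5v} since $0<1<\dotsb<2n+1$. Concretely, each $\eta_i\in H^1(X(2i,2i+1)^-;\FF_2)$ is, via \cref{x22ju6}, represented by a function on $\aleph_{2i}\times\aleph_{2i+1}$; taking the decompositions $\aleph_{2i}=E_{2i}\amalg F_{2i}$ and $\aleph_{2i+1}=E_{2i+1}\amalg F_{2i+1}$ of \cref{xayddy}, I can choose the representative of $\eta_i$ corresponding to $e$ as in \cref{xayddy} — but since $H^1(X(2i,2i+1)^-;\FF_2)$ is the cokernel in a two-term complex, I should be careful about which function represents $\eta_i$; I will simply fix the representative to be the indicator of $E_{2i}\times E_{2i+1}$ (this is \emph{the} class $\epsilon$ for the length-two sequence, hence nonzero and a legitimate choice).

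First I would make the identification of \cref{x2gquz} explicit on cochains. The isomorphism $H^{n+1}(U;\FF_2)\simeq H^{2n+1}(X(\alpha)^-;\FF_2)$ comes from writing the constant sheaf on each factor $U_i$ as $\cofib((\FF_2)_{V_i\cap W_i}\to (\FF_2)_{V_i}\oplus(\FF_2)_{W_i})$ and taking the external product; mapping into $\FF_2$ and using \cref{x22ju6} for the total space, the top cohomology of $U$ is the quotient of $\prod_i\Cls C(\aleph_{2i}\times\aleph_{2i+1};\FF_2)$-style data, and the degree-$(n+1)$ part picks out exactly functions on $\prod_i(\aleph_{2i}\times\aleph_{2i+1})=\aleph_0\times\dotsb\times\aleph_{2n+1}$, i.e.\ $2n+2$-variable functions, matching the top cochains of $X(\alpha)^-$. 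Under this identification the Künneth/external-product formula sends a tuple of top classes $(\eta_0,\dots,\eta_n)$ — each $\eta_i$ a function $g_i$ on $\aleph_{2i}\times\aleph_{2i+1}$ — to the class of the product function $(x_0,\dots,x_{2n+1})\mapsto \prod_{i=0}^n g_i(x_{2i},x_{2i+1})$. This is the step I expect to be the main obstacle: I must verify that the cup product on $H^*(U;\FF_2)$, computed via the oriented Čech complex of \cref{x22ju6} for the covers $\{V_i,W_i\}$, really is carried to this multiplication of functions under the canonical isomorphism, i.e.\ that the Alexander--Whitney-type diagonal on the external tensor product of these two-term cofiber sequences reduces, after mapping to $\FF_2$ and passing to top degree, to pointwise multiplication. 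I would do this by checking it for a single factor first (so $n=0$ is vacuous, $n=1$ is the model case $H^1\otimes H^1\to H^2$) and then invoking multiplicativity of $\boxtimes$.

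Having done that, the rest is a short computation: with $g_i=\mathbf 1_{E_{2i}\times E_{2i+1}}$, the product function is $\prod_i\mathbf 1_{E_{2i}\times E_{2i+1}}=\mathbf 1_{E_0\times E_1\times\dotsb\times E_{2n}\times E_{2n+1}}$, which is exactly the function $e$ of \cref{xayddy} for the sequence $\alpha=(0,1,\dots,2n,2n+1)$ with the given decompositions. Hence $\eta_0\dotsb\eta_n$ corresponds under \cref{x2gquz} to $\epsilon\in H^{2n+1}(X(0,1,\dots,2n+1)^-;\FF_2)$, which is nonzero by \cref{xqxs5v} because the sequence is strictly increasing. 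Therefore $\eta_0\dotsb\eta_n\neq0$, and since $X(0,1)^-\times\dotsb\times X(2n,2n+1)^-$ has cardinality $\aleph_{2n+1}$ by \cref{xy2pou}, this also proves \cref{main}. The only subtlety I would flag in writing this up is making sure the chosen cochain representatives of the $\eta_i$ are compatible with whatever sign/orientation conventions are baked into the oriented Čech complex, but over $\FF_2$ signs are irrelevant, so this is purely bookkeeping.
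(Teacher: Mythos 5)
Your proposal is correct and follows essentially the same route as the paper: the paper's proof also identifies \(\eta_{0}\dotsb\eta_{n}\), via the isomorphism of \cref{x2gquz}, with the class of \cref{xayddy} for the sequence \((0,\ldots,2n+1)\) and then invokes \cref{xqxs5v}. The paper states this identification in one sentence without the explicit cochain-level verification you outline, but the underlying argument is the same.
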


\begin{proof}
  With the isomorphism in \cref{x2gquz},
  the product corresponds to
  the class constructed in \cref{xayddy}
  for \((0,\ldots,2n+1)\),
  which is nonzero by \cref{xqxs5v}.
\end{proof}

\begin{remark}\label{xz6mfb}
  In \cref{x728ui},
  we can take \(U=(X(2n+1,2n+1)^{-})^{n}\)
  to obtain a more symmetric example;
  see \cref{xgwz1r}.
\end{remark}

We conclude this section with a question:

\begin{question}\label{xci44g}
  Is \cref{main} optimal in terms of the weight (or cardinality) of~\(U\)?
\end{question}

\section{Applications}\label{s:app}

In this section,
we prove \cref{main_si,main_des,main_b}
in \cref{ss:stone,ss:des,ss:betti}, respectively.

\subsection{Boolean rings can have infinite self-injective dimensions}\label{ss:stone}

We recall the following form of Stone duality:

\begin{theorem}\label{x4mdjv}
  Let \(\Cat{Bool}\) and \(\Cat{PFin}\)
  denote the categories of Boolean rings
  and profinite sets, respectively.
  Then the functors \(\Spec\)
  and \(\Cls{C}(\X;\FF_{2})\)
  define an equivalence
  \(\Cat{Bool}^{\op}\simeq\Cat{PFin}\).
  For a Boolean ring~\(B\),
  the category of \(B\)-modules
  is canonically equivalent
  to that of sheaves of \(\FF_{2}\)-vector spaces
  on \(\Spec B\).
\end{theorem}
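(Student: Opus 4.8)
The plan is to treat the two assertions separately, the first being classical and the second being the point where a little care is needed. For the equivalence $\Cat{Bool}^{\op}\simeq\Cat{PFin}$ I would recall the Stone representation theorem in its ring-theoretic form: if $B$ is a Boolean ring then every $b\in B$ satisfies $b(1-b)=0$, so $D(b)=V(1-b)$ is clopen, the $D(b)$ form a basis closed under finite intersection ($D(b)\cap D(b')=D(bb')$), and $\Spec B$ is compact Hausdorff and totally disconnected, i.e.\ profinite; conversely $\Cls{C}(X;\FF_{2})$ is a Boolean ring for every profinite~$X$. The unit $B\to\Cls{C}(\Spec B;\FF_{2})$, $b\mapsto(\mathfrak{p}\mapsto b\bmod\mathfrak{p})$, is injective because $B$ is reduced and surjective because every locally constant function $\Spec B\to\FF_{2}$ is the indicator of its (clopen) support, which is the image of the corresponding idempotent; the counit $X\to\Spec\Cls{C}(X;\FF_{2})$, $x\mapsto\{f:f(x)=0\}$, is a continuous bijection between compact Hausdorff spaces, hence a homeomorphism. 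This is all standard Stone duality.

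For the statement about modules, the crucial observation is that the structure sheaf of $\Spec B$ is the constant sheaf~$\underline{\FF_{2}}$: on the clopen basis $\Cls{O}_{\Spec B}(D(b))=B[b^{-1}]=bB$, and under the identification $B=\Cls{C}(\Spec B;\FF_{2})$ this is exactly $\{g\in B:g=bg\}=\Cls{C}(D(b);\FF_{2})=\underline{\FF_{2}}(D(b))$, compatibly with restrictions. Since a sheaf of $\FF_{2}$-vector spaces on any space is, uniquely, a module over the constant sheaf~$\underline{\FF_{2}}$ (multiplication by a locally constant function is forced by the sheaf axiom), the claim becomes the identification $\Mod_{B}\simeq\Mod_{\Cls{O}_{\Spec B}}$. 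I would set this up by hand: define $\Phi\colon\Mod_{B}\to\Shv(\Spec B;\FF_{2})$ on the clopen basis by $\Phi(M)(D(b))=bM$ with the evident restrictions, verify the sheaf axiom there by refining a finite clopen cover $D(b)=D(b_{1})\cup\dotsb\cup D(b_{k})$ to a partition into pairwise orthogonal idempotents, for which $bM$ becomes the corresponding finite direct sum, and then extend $\Phi(M)$ uniquely to a sheaf on $\Spec B$; in the other direction set $\Psi(\Cls{F})=\Gamma(\Spec B,\Cls{F})$, a module over $\Gamma(\Spec B,\underline{\FF_{2}})=B$. (Alternatively, one may invoke the equivalence $\Mod_{B}\simeq\operatorname{QCoh}(\Spec B)$, valid for any commutative ring, together with $\Cls{O}_{\Spec B}\simeq\underline{\FF_{2}}$.)

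Finally one checks that $\Phi$ and $\Psi$ are mutually inverse. The composite $\Psi\Phi\simeq\id$ is immediate since $\Phi(M)(D(1))=M$. For $\Phi\Psi\simeq\id$ one computes $\Phi(\Psi(\Cls{F}))(D(b))=b\cdot\Gamma(\Cls{F})=\Cls{F}(D(b))$ naturally in~$b$, the middle equality holding because $\Gamma(\Cls{F})=\Cls{F}(D(b))\oplus\Cls{F}(D(1-b))$ with $b=\mathbf{1}_{D(b)}$ acting as the projection to the first summand; an isomorphism on the clopen basis is an isomorphism of sheaves. The main obstacle---indeed essentially the only step that is not bookkeeping---is exactly this reduction to the clopen basis and to finite (hence orthogonal-idempotent) covers: it is where compactness and total disconnectedness of $\Spec B$ enter, and without them the presheaf $D(b)\mapsto bM$ would not obviously sheafify with the expected sections, nor would $\Psi$ be information-preserving. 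Everything else---additivity and exactness of $\Phi$ and $\Psi$, naturality in $B$, and the comparison with $\operatorname{QCoh}$---is routine.
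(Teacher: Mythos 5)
Your proof is correct, but there is nothing to compare it against: the paper states this theorem as a recalled form of Stone duality and gives no proof at all, immediately drawing corollaries from it. Your write-up is a sound and complete account of the standard argument, and you correctly isolate the one assertion that goes beyond textbook Stone duality, namely that \emph{every} sheaf of \(\FF_{2}\)-vector spaces on \(\Spec B\) (not merely every quasi-coherent \(\Cls{O}\)-module) arises from a \(B\)-module. That is exactly where the specific topology of a profinite set enters: the clopen sets \(D(b)\) form a basis closed under finite intersections, each is compact so arbitrary covers reduce to finite ones, and finite clopen covers refine to partitions by orthogonal idempotents, which is what makes \(D(b)\mapsto bM\) a sheaf on the basis and makes \(\Cls{F}(D(b))=b\,\Gamma(\Cls{F})\) via the splitting \(\Gamma(\Cls{F})=\Cls{F}(D(b))\oplus\Cls{F}(D(1+b))\). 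All the individual verifications you describe (injectivity of the unit from reducedness, surjectivity from clopen sets being \(D(b)\)'s, the counit being a continuous bijection of compact Hausdorff spaces, \(\Cls{O}_{\Spec B}\simeq\underline{\FF_{2}}\), and the mutual inversion of \(\Phi\) and \(\Psi\) checked on the basis) are accurate; the only steps left implicit, such as surjectivity of the counit via the finite intersection property and the gluing for infinite covers by passing to a finite subcover, are routine and correctly flagged as such.
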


\begin{remark}\label{x9rfq1}
  Note that
  for an infinite profinite set~\(X\),
  the weight of~\(X\)
  coincides with \(\lvert\Cls{C}(X;\FF_{2})\rvert\).
\end{remark}

\begin{corollary}\label{xhwhh2}
  Any Boolean ring is absolutely flat;
  i.e., any module is flat.
\end{corollary}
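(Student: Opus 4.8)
The plan is to use Stone duality (\cref{x4mdjv}) to translate the claim into a topological statement and then invoke the structure of profinite sets. Under the equivalence $\Cat{Bool}^{\op}\simeq\Cat{PFin}$, a Boolean ring $B$ corresponds to the profinite set $X=\Spec B$, and the category of $B$-modules is identified with $\Shv(X;\FF_{2}\text{-}\mathrm{Vect})$. So it suffices to show that on a profinite set $X$, every sheaf of $\FF_{2}$-vector spaces is flat as an object of this abelian category — equivalently, that the tensor product of sheaves is exact, or that $\operatorname{Tor}_{1}$ vanishes identically.

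First I would reduce to a pointwise statement: tensor products of sheaves on any space are computed stalkwise, and exactness of a sequence of sheaves can be checked on stalks. For a profinite set $X$, the stalk of a sheaf of $\FF_{2}$-vector spaces at a point $x$ is an $\FF_{2}$-vector space, and every $\FF_{2}$-vector space is flat over $\FF_{2}$ (indeed $\FF_{2}$ is a field, so every module is free). Hence $- \otimes_{\FF_{2}} M_x$ is exact for each stalk, and therefore $- \otimes F$ is exact on sheaves; so $F$ is flat. Translating back across \cref{x4mdjv}, every $B$-module is flat, i.e.\ $B$ is absolutely flat.

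The main (and essentially only) subtlety is making sure the identification of $B$-modules with sheaves on $\Spec B$ in \cref{x4mdjv} is an equivalence of abelian (or symmetric monoidal abelian) categories, so that "flat" on one side really does correspond to "flat" on the other — but this is part of the content already granted in \cref{x4mdjv}, since the module category of a Boolean ring is the category of sheaves of $\FF_2$-vector spaces and tensor product of modules matches tensor product of sheaves. Alternatively, one can bypass sheaves entirely: a classical ring-theoretic argument shows any Boolean ring is von Neumann regular, since for $b\in B$ one has $b\cdot b\cdot b=b$ (as $b^2=b$), so $b$ is its own weak inverse; von Neumann regular rings are exactly the absolutely flat rings. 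I would likely present whichever of these is shorter given the surrounding exposition; the sheaf-theoretic route is the one consonant with the rest of the paper, so I expect to record it as a one-line consequence of \cref{x4mdjv} together with the fact that $\FF_2$ is a field.
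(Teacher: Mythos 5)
Your proposal is correct and matches the paper's treatment: the paper states this as an immediate corollary of \cref{x4mdjv} with no written proof, and your stalkwise argument (stalks are $\FF_{2}$-vector spaces, hence flat, and tensor product and exactness of sheaves are both checked on stalks) is exactly the intended justification. The classical alternative you mention (every Boolean ring is von Neumann regular since $b=b\cdot b\cdot b$, and von Neumann regular rings are precisely the absolutely flat ones) is also valid and equally acceptable here.
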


\begin{corollary}\label{xwyfcu}
  A map from a Boolean ring to a ring is faithfully flat
  if and only if it is injective.
\end{corollary}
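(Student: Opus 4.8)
The plan is to isolate the only non-formal content: by \Cref{xhwhh2} flatness is automatic over a Boolean ring, so both directions are really statements about when the flat \(B\)-module \(R\) is \emph{faithful}, and the Boolean hypothesis enters through the fact that every finitely generated ideal of \(B\) is generated by a single idempotent.

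First I would record the ``only if'' direction, which holds over an arbitrary ring. If \(f\colon B\to R\) is faithfully flat with kernel \(K\), tensor the exact sequence \(0\to K\to B\to R\) with the flat module \(R\) to obtain an exact sequence \(0\to K\otimes_{B}R\to R\to R\otimes_{B}R\); the map \(r\mapsto r\otimes 1\) on the right is split injective, with multiplication as a retraction, so \(K\otimes_{B}R=0\), and faithful flatness forces \(K=0\).

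For the ``if'' direction, let \(f\colon B\to R\) be injective. By \Cref{xhwhh2}, \(R\) is flat over \(B\), so it suffices to show \(\mathfrak{m}R\neq R\) for every maximal ideal \(\mathfrak{m}\subset B\) (a flat module being faithfully flat precisely when it is nonzero modulo every maximal ideal). Suppose \(\mathfrak{m}R=R\); then \(1=\sum_{i}m_{i}r_{i}\) with \(m_{i}\in\mathfrak{m}\) and \(r_{i}\in R\), and since the \(B\)-ideal \((m_{1},\dots,m_{k})\) equals \((e)\) for a single idempotent \(e\in\mathfrak{m}\), each \(m_{i}\) lies in \(eB\), so \(1\in f(e)R\), whence \(f(e)=1\) because \(f(e)\) is an idempotent unit of \(R\). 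But then \(1-e\) is a nonzero element of \(B\) — nonzero since \(e\neq1\), as \(\mathfrak{m}\) is proper — lying in \(\ker f\), contradicting injectivity. Hence \(R\) is faithfully flat.

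I do not expect a real obstacle; the one step that genuinely uses Booleanness is the replacement of ``\(\mathfrak{m}R=R\)'' by ``some idempotent of \(\mathfrak{m}\) maps to a unit of \(R\)'', and it is worth flagging that without this the claim fails over general rings (e.g.\ \(\ZZ\hookrightarrow\mathbf{Q}\) is injective but not faithfully flat). Alternatively one could run the faithfulness step through \Cref{x4mdjv}, noting that injectivity of \(B\to R\) means no nonzero idempotent of \(B\) becomes zero in \(R\), which is exactly surjectivity of \(\Spec R\to\Spec B\); but the elementary argument above seems cleanest.
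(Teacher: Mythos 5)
Your proof is correct. The paper gives no argument for this corollary at all --- it is stated as an immediate consequence of Stone duality (\Cref{x4mdjv}) and absolute flatness (\Cref{xhwhh2}) --- so there is nothing to diverge from; your write-up simply supplies the details the author left to the reader. Both halves are sound: the ``only if'' direction is the standard split-injectivity of \(R\to R\otimes_{B}R\) applied to the flat module \(R\), and the ``if'' direction correctly identifies the one place Booleanness is needed, namely that a finitely generated ideal of \(B\) is generated by a single idempotent \(e\), so that \(\mathfrak{m}R=R\) forces \(f(e)=1\) for some \(e\in\mathfrak{m}\) and hence puts the nonzero element \(1-e\) in \(\ker f\). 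Your closing alternative via surjectivity of \(\Spec R\to\Spec B\) is essentially the same argument in topological clothing, which is presumably how the author thinks of it given \Cref{x4mdjv}; either phrasing is fine.
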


\begin{corollary}\label{xjlvu9}
  For \(n\geq0\),
  any Boolean ring with cardinality~\(\aleph_{n}\)
  has global dimension~\(\leq n+1\).
\end{corollary}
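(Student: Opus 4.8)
The plan is to run Stone duality into sheaf cohomology and reduce the statement to a Pierce-type cohomological-dimension estimate. By \cref{x4mdjv}, with $X=\Spec B$, the category of $B$-modules is identified with that of sheaves of $\FF_2$-vector spaces on~$X$; if $X$ is finite then $B$ is semisimple and there is nothing to prove, so by \cref{x9rfq1} we may assume $X$ is an infinite profinite set of weight~$\le\aleph_n$. By Baer's criterion, $\operatorname{gl.dim}B\le n+1$ is equivalent to the vanishing of $\Ext^{m}_B(B/I,M)$ for every ideal $I\subseteq B$, every $B$-module~$M$, and every $m\ge n+2$; that is, it suffices to bound the projective dimension of each cyclic module by~$n+1$.

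Since $B$ is Boolean, every ideal $I$ is generated by idempotents; let $W\subseteq X$ be the union of the clopen sets they cut out, so $W$ is open, $Z:=X\setminus W$ is closed, and under \cref{x4mdjv} the module $B/I$ corresponds to $i_*\FF_2$ for the closed embedding $i\colon Z\hookrightarrow X$ (apply the recollement triangle $j_!j^*\to\id\to i_*i^*$ to the constant sheaf, $j\colon W\hookrightarrow X$ being the open complement). As $i_*$ admits a right adjoint $i^!$ fitting into a triangle $i_*i^!\to\id\to Rj_*j^*$, I obtain
\[
  \operatorname{RHom}_B(B/I,M)\simeq\operatorname{RHom}_{\Shv(Z)}(\FF_2,i^!M)\simeq R\Gamma(Z;i^!M).
\]
Now $R\Gamma(Z;-)$ is exact on sheaves because $Z$ is profinite (the vanishing used in \cref{x22ju6}), hence preserves cohomological amplitude; and $i_*i^!M=\fib(M\to Rj_*j^*M)$ lies in cohomological degrees $[0,\operatorname{cd}(W)+1]$, since $M$ is a sheaf and $R^qj_*$ vanishes for $q>\operatorname{cd}(W)$, whence $i^!M$ has the same amplitude ($i_*$ being $t$-exact and conservative). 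Therefore $\Ext^m_B(B/I,M)=H^m R\Gamma(Z;i^!M)=0$ for $m>\operatorname{cd}(W)+1$, and everything comes down to the inequality $\operatorname{cd}(W)\le n$.

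This last estimate is the crux and the place where genuine work is needed: an open subset of a profinite set of weight~$\le\aleph_n$ — equivalently, passing to the one-point compactification, a locally profinite set of weight~$\le\aleph_n$ — has cohomological dimension~$\le n$. This is the upper-bound counterpart to Pierce's \cite[Theorem~5.1]{Pierce67}; I would prove it by transfinite induction over a cofinal chain of clopen subsets (the base case $n=0$ being the remark that an open subset of a second-countable profinite set is a countable coproduct of profinite sets, hence has cohomological dimension~$0$). Granting it, the corollary follows. One can also bypass the topology entirely and cite Osofsky's theorem that a ring of cardinality~$\le\aleph_n$ has global dimension at most its weak dimension plus~$n+1$, combined with \cref{xhwhh2}; the argument above is essentially a geometric repackaging of that bound in the Boolean case.
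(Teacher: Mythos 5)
Your closing sentence is, in fact, the paper's entire proof: the corollary is deduced by combining \cref{xhwhh2} (Boolean rings are absolutely flat, hence of weak dimension zero) with Osofsky's bound \cite[Corollary~1.4]{Osofsky68}, which says that for a ring of cardinality $\aleph_{n}$ the global dimension exceeds the weak dimension by at most $n+1$. So the ``bypass'' you offer as an afterthought is complete and is exactly what the paper does; had you led with it, there would be nothing to criticize.

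The geometric argument you present as the main route, however, has a genuine gap at the step you yourself flag as the crux. The homological reductions are sound: identifying $B/I$ with $i_{*}\FF_{2}$, using the recollement triangle to place $i^{!}M$ in amplitude $[0,\operatorname{cd}(W)+1]$, and using exactness of $R\Gamma(Z;\X)$ on the profinite set $Z$ all work (note that for the vanishing of $R^{q}j_{*}$ you need the cohomological-dimension bound for every open $V\cap W$, not just for $W$, but these are again open subsets of $X$, so your formulation covers them). The problem is that the estimate $\operatorname{cd}(W)\le n$ for open subsets of profinite sets of weight $\le\aleph_{n}$ is essentially equivalent to the corollary itself --- compare \cref{xazmzz}, where the analogous cohomological vanishing \emph{is} the self-injective dimension bound --- and ``transfinite induction over a cofinal chain of clopen subsets'' is not a proof of it. The base case $n=0$ you give is fine, but the inductive step for $n\ge1$ is precisely where the real work lies: one must bound the (projective or cohomological) dimension of a well-ordered union of length $\aleph_{n}$ of pieces of smaller weight, and the fact that each such step can cost at most one extra dimension is the content of the Auslander--Osofsky results on dimensions of directed colimits. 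As written, your main argument reduces the problem to an unproved statement of the same difficulty; either carry out that induction in detail or simply cite \cref{xhwhh2} together with \cite[Corollary~1.4]{Osofsky68}.
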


\begin{proof}
  This follows from \cref{xhwhh2} and~\cite[Corollary~1.4]{Osofsky68}.
\end{proof}

The self-injective dimension of a Boolean ring
has a topological interpretation:

\begin{proposition}\label{xazmzz}
Let \(B\) be a Boolean ring
  and \(X=\Spec B\) the corresponding profinite set.
  Then for \(n\geq0\),
  its self-injective dimension is~\(\leq n+1\)
  if and only if
  \(H^{*}(U;\FF_{2})=0\) for \({*}>n\)
  for any open subset~\(U\) of~\(X\).
\end{proposition}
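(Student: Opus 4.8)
The plan is to identify the self-injective dimension of $B$ with a sheaf-cohomological invariant of $X = \Spec B$ and then translate that invariant into the vanishing statement about open subsets. Under the equivalence of \cref{x4mdjv}, the category of $B$-modules is $\Shv(X;\FF_2\text{-}\mathrm{Mod})$, the abelian category of sheaves of $\FF_2$-vector spaces on $X$. The ring $B$ itself corresponds to the constant sheaf $\FF_2$. So ``self-injective dimension of $B \leq n+1$'' translates to ``the constant sheaf $\FF_2 \in \Shv(X;\FF_2\text{-}\mathrm{Mod})$ has injective dimension $\leq n+1$''; equivalently, $\Ext^i_{\Shv(X)}(\mathcal{G},\FF_2) = 0$ for all $i > n+1$ and all sheaves $\mathcal{G}$ on $X$.

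First I would reduce the class of test sheaves $\mathcal{G}$. Since $X$ is a profinite set, its topology has a basis of clopen sets, and every open subset $U \subseteq X$ is a (possibly infinite) union of clopens. The sheaves $j_!\FF_2$ for $j\colon U \hookrightarrow X$ an open immersion (extension by zero of the constant sheaf on $U$) generate $\Shv(X;\FF_2\text{-}\mathrm{Mod})$ under colimits — indeed every sheaf is a quotient of a direct sum of such $j_!\FF_2$'s, because a section over an open set $U$ gives a map $j_!\FF_2 \to \mathcal{G}$, and these are jointly surjective. Injective dimension $\leq n+1$ can be tested on a generating family: $\FF_2$ has injective dimension $\leq n+1$ if and only if $\Ext^i(j_!\FF_2, \FF_2) = 0$ for $i > n+1$ and all open $U$. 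Now the key identification is
\begin{equation*}
  \Ext^i_{\Shv(X)}(j_!\FF_2,\FF_2) \simeq H^i(U;\FF_2),
\end{equation*}
which is the adjunction $\Ext^i(j_!\FF_2,\FF_2) \simeq H^i(\RMap(j_!\FF_2,\FF_2)) \simeq H^i(R\Gamma(U; j^*\FF_2)) = H^i(U;\FF_2)$, using that $j_!$ is left adjoint to $j^*$ (restriction) and $j^*\FF_2 = \FF_2$. So injective dimension $\leq n+1$ is equivalent to $H^i(U;\FF_2) = 0$ for $i > n+1$ and all open $U \subseteq X$; but since $U$ is itself a locally profinite set (open in a profinite set), and in fact one can check $H^{n+1}$ is already forced to vanish once $H^{>n}$ does for all smaller opens — here one must be slightly careful about the exact indexing. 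Let me instead argue the clean equivalence directly: self-injective dimension $\leq n+1$ $\iff$ $H^{>n+1}(U;\FF_2) = 0$ for all open $U$, and then separately observe that $H^{n+1}(U;\FF_2) = 0$ for all open $U$ is automatic from $H^{>n}(U';\FF_2)=0$ for all open $U'$ via the long exact sequence relating an open $U$, a clopen piece, and its complement — this is where the bookkeeping lives.

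The main obstacle I expect is precisely this last off-by-one point: showing that the naive translation ``injective dimension $\leq n+1$ $\iff$ $H^{>n+1}$ vanishes'' can be sharpened to ``$\iff$ $H^{>n}$ vanishes on all opens'', i.e.\ that one cohomological degree is gained for free. I would handle it by the standard dévissage: for an open $U \subseteq X$ with a clopen decomposition, or more usefully by writing $U$ as a filtered union of clopens $U = \bigcup_\alpha K_\alpha$ and using that $j_!\FF_2 = \injlim (j_{K_\alpha})_! \FF_2$, so $H^i(U;\FF_2) = \injlim H^i(K_\alpha;\FF_2)$; since each $K_\alpha$ is clopen in $X$, hence profinite, $H^{>0}(K_\alpha;\FF_2)=0$, which would give $H^{>0}(U;\FF_2)=0$ for \emph{every} open $U$ — but that is false (Wiegand's example!), so the filtered colimit of cohomology is not computing sheaf cohomology of the non-quasicompact $U$ correctly, and I must instead work with $R\Gamma(X; j_!\FF_2)$ versus $R\Gamma(U;\FF_2)$, which differ. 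The correct route is: $\RMap_{\Shv(X)}(j_!\FF_2, \FF_2) \simeq R\Gamma(U;\FF_2)$ holds on the nose (no colimit subtlety, since $j_!$ is a genuine left adjoint), so the equivalence is simply that self-injective dimension $\leq n+1$ iff $H^{>n+1}(U;\FF_2)=0$ for all open $U$; and then $H^{>n+1}(U;\FF_2)=0$ for all open $U$ is equivalent to $H^{>n}(U;\FF_2)=0$ for all open $U$ by applying the hypothesis to the open subsets appearing in a Mayer--Vietoris for $U = (\text{clopen } K) \cup (\text{open } U\setminus K)$ together with the fact that clopen subsets contribute nothing in positive degrees — closing this induction on the ``complexity'' of $U$ is the real content, and I would spell it out by picking a point $\infty$ of the profinite set $X$ lying in the closure of $U$, separating a clopen neighborhood, and inducting.
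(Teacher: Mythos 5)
There is a genuine gap, and it sits exactly at the ``off-by-one point'' you flagged. Your reduction ``self-injective dimension $\leq n+1$ iff $\Ext^{i}(j_{!}\FF_{2},\FF_{2})=0$ for $i>n+1$ and all open $U$'' is not a valid test: vanishing of $\Ext$ against a generating family does not detect injective dimension (over $\ZZ$, the generator $\ZZ$ is projective, so $\Ext^{>0}(\ZZ,M)=0$ for every $M$, injective or not). The correct test is the Baer criterion, which uses the \emph{quotients} $B/J$ of the generator $B$ by its subobjects, not the subobjects $J=j_{!}\FF_{2}$ themselves: the self-injective dimension is $\leq n+1$ iff $\Ext^{n+2}_{B}(B/J,B)=0$ for every ideal $J$. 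The short exact sequence $0\to J\to B\to B/J\to0$ together with $\Ext^{>0}_{B}(B,B)=0$ gives $\Ext^{i+1}_{B}(B/J,B)\simeq\Ext^{i}_{B}(J,B)\simeq H^{i}(U;\FF_{2})$ for $i\geq1$ and $U=\Spec B\setminus\V(J)$, and this shift is precisely the one free degree you were hunting for. Your criterion, run through the same shift, characterizes self-injective dimension $\leq n+2$, not $\leq n+1$.

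The proposed patch is false as stated. You claim that ``$H^{>n+1}(U;\FF_{2})=0$ for all open $U$'' is equivalent to ``$H^{>n}(U;\FF_{2})=0$ for all open $U$'' via a Mayer--Vietoris induction; if that held for every $n$, downward induction would force every open subset of every profinite set to have vanishing positive cohomology, contradicting \cref{main_w}. Concretely, for the Boolean ring of Wiegand's example (cardinality $\aleph_{1}$), \cref{xjlvu9} gives global dimension $\leq2$, hence $H^{>1}(U;\FF_{2})=0$ for every open $U$, yet $H^{1}(X(0,1)^{-};\FF_{2})\neq0$; so the implication fails already at $n=0$. No dévissage over clopen pieces can close this, since clopen subsets are profinite and contribute nothing --- the nontrivial classes live exactly on the non-quasicompact opens, as your own filtered-colimit digression shows. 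The fix is homological, not topological: replace the test objects $j_{!}\FF_{2}$ by their cofibers $B/J$, invoke Baer, and read off the degree shift from the long exact sequence. That is the paper's (three-line) proof.
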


\begin{proof}
  By the Baer criterion,
  the condition
  on the self-injective dimension
  is equivalent to \(\Ext^{*}_{B}(B/J,B)=0\)
  for \({*}>n+1\)
  and any ideal~\(J\).
  By the exact sequence \(J\to B\to B/J\),
  this is furthermore equivalent
  to \(\Ext^{*}_{B}(J,B)=0\)
  for \({*}>n\).
  However,
  by \cref{x4mdjv},
  \(\Ext^{*}_{B}(J,B)\simeq
  H^{*}(U;\FF_{2})\)
  for \(U=\Spec B\setminus\V(J)\).
\end{proof}

\begin{remark}\label{xddxfq}
  We cannot consider \(n=-1\) in \cref{xazmzz}.
  However,
  it is known that
  a Boolean ring is self-injective
  if and only if
  the corresponding profinite set is extremally disconnected;
  see~\cite[Section~2.4]{Lambek66}.
\end{remark}

\begin{proof}[Proof of \cref{main_si}]
  By \cref{main_w},
  there is a locally profinite set~\(U\)
  of weight~\(\aleph_{n}\)
  with \(H^{n}(U;\FF_{2})\neq0\).
  Then we take \(B=\Cls{C}(U^{+};\FF_{2})\),
  where \(U^{+}\) is the one-point compactification of~\(U\).
  We obtain the desired result by \cref{xazmzz}.
  For the second part,
  we argue in the same way
  starting
  with the disjoint union of the examples in \cref{main_w}
  for each~\(n\).
\end{proof}

\subsection{Faithful flatness does not imply descendability}\label{ss:des}

We recall the following quantitative version of descendability
from~\cite[Definition~11.18]{BhattScholze17}:

\begin{definition}\label{xw6pol}
  We say that
  a map of \(\E_{\infty}\)-rings \(f\colon A\to B\)
  is \emph{descendable of index \(\leq n\)}
  if \(\fib(f)^{\otimes_{A}n}\to A\) is null.
  We say that \(f\) is \emph{descendable} if
  it is descendable of index \(\leq n\) for some \(n\geq0\).
\end{definition}

\begin{example}\label{x9pw44}
  A map \(A\to B\) of \(\E_{\infty}\)-rings
  is descendable of index \(\leq0\)
  if and only if \(A=0\) (hence \(B=0\)).
\end{example}

\begin{example}\label{x5dzhu}
  For maps \(A\to B\to C\) of \(\E_{\infty}\)-rings,
  if \(A\to C\) is descendable of index \(\leq n\),
  so is \(A\to B\).
\end{example}

\begin{example}\label{xxw7dx}
  For \(n\geq0\),
  suppose that the self-injective dimension of
  a ring~\(A\) is \(<n\),
  Then any faithfully flat map \(f\colon A\to B\)
  is descendable of index \(\leq n\),
  since the class
  \(\fib(f)^{\otimes_{A}n}\to A\)
  is an element of \(\Ext^{n}_{A}(\coker(f)^{\otimes_{A}n},A)\),
  which vanishes.
\end{example}

\begin{example}\label{xm9jpg}
  For \(n\geq0\),
  by combining \cref{xxw7dx}
  with \cref{xjlvu9},
  we see that
  a faithfully flat map from a Boolean ring~\(B\)
  is descendable of index \(\leq n+2\)
  when \(\lvert B\rvert\leq\aleph_{n}\).
\end{example}

We recall the following from the first version of~\cite{Zelich2},
whose construction is attributed to Aise~Johan de~Jong there:

\begin{proposition}\label{deformed}
  Consider a ring~\(A\),
  a flat \(A\)-module~\(M\),
  and a class \(\eta\in\Ext^{1}_{A}(M,A)\).
  Then the deformed symmetric algebra \(f\colon A\to B\)
  (see the proof below for the construction)
  is a faithfully flat map of rings
  such that
  the map~\(f\) is not descendable of index \(\leq n\)
  when \(\eta^{n}\in\Ext^{n}_{A}(M^{\otimes_{A}n},A)\) does not vanish.
\end{proposition}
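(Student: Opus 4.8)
The plan is to construct $B$ as a "deformed symmetric algebra" attached to the data $(A,M,\eta)$ and then verify faithful flatness and the failure of descendability. For the construction: choose a presentation of the class $\eta \in \Ext^1_A(M,A)$ by a short exact sequence $0 \to A \to E \to M \to 0$ of $A$-modules (possible since $\Ext^1$ classifies extensions), with $E$ flat because $A$ and $M$ are flat. I would then set $B$ to be the quotient of the symmetric algebra $\Sym_A(E)$ by the ideal generated by the relations identifying the degree-one copy of $A \subset E$ with the unit $1 \in A = \Sym^0_A(E)$; concretely, $B = \Sym_A(E)/(a - a\cdot 1 : a \in A \subset E)$. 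Filtering $\Sym_A(E)$ by symmetric powers, the associated graded of $B$ is $\bigoplus_{k \ge 0} \Sym^k_A(M)$ (the copy of $A$ in $E$ gets "absorbed" one degree at a time), each summand of which is flat since $M$ is; hence $B$ is flat over $A$, and it is faithfully flat because $A = \Sym^0 \hookrightarrow B$ is a direct summand of the associated graded in degree zero, so $A \to B$ is injective (a flat injection with cokernel admitting a flat filtration is faithfully flat, or one checks directly that $\Spec B \to \Spec A$ is surjective).

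For the descendability statement, the key computation is to identify $\fib(f)^{\otimes_A n} \to A$ in the derived category $\D(A)$. Since $f \colon A \to B$ is injective with flat cokernel, $\fib(f) \simeq \coker(f)[-1]$, and from the associated graded description $\coker(f) \simeq \bigoplus_{k \ge 1} \Sym^k_A(M)$, which as a pointed object over $A$ has $M = \Sym^1_A(M)$ as a canonical retract. Thus $\fib(f)$ retracts onto $M[-1]$, and correspondingly $\fib(f)^{\otimes_A n}$ retracts onto $M^{\otimes_A n}[-n]$. Under this retraction the canonical map $\fib(f)^{\otimes_A n} \to A$ restricts, on the summand $M^{\otimes_A n}[-n]$, to a map $M^{\otimes_A n}[-n] \to A$, i.e., an element of $\Ext^n_A(M^{\otimes_A n}, A)$; one must check that this element is precisely $\eta^{\otimes n}$ (equivalently the $n$-fold Yoneda product $\eta^n$). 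This is where I expect the real work to be: tracking the multiplicative structure of $B$ against the Yoneda product. The point is that the multiplication $B \otimes_A B \to B$ restricted to the "linear" pieces $E \otimes_A E \to \Sym^2_A(E) \subset B$ encodes, modulo lower filtration, exactly the extension class $\eta$; iterating $n$ times and passing to fibers gives $\eta^n$ on the nose. Concretely I would realize $\fib(f)$ by the two-term complex $[A \xrightarrow{\subset} E]$ in degrees $-1, 0$ (a model for $\coker(f)[-1] \simeq M[-1]$ up to the flat summands, but using $E$ keeps everything strictly commutative), compute $\fib(f)^{\otimes_A n}$ as the $n$-fold tensor of this complex, and observe that the augmentation to $A$ is given by the relation defining $B$; the induced map on $H^{-n}$ or on the relevant $\Ext$ is then a direct unwinding of the Koszul-type signs and is exactly the cup/Yoneda power of $\eta$.

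Granting that identification, the conclusion is immediate: if $\eta^n \in \Ext^n_A(M^{\otimes_A n}, A)$ is nonzero, then $\fib(f)^{\otimes_A n} \to A$ is nonzero in $\D(A)$ because it is nonzero after composing with a retraction onto $M^{\otimes_A n}[-n]$, so $f$ is not descendable of index $\le n$ by \cref{xw6pol}. The main obstacle, as noted, is the bookkeeping in the middle step — making precise that the filtration on $B$ splits off the $\Sym^k_A(M)$ and that the ring multiplication realizes the Yoneda product of $\eta$ with itself; everything else (flatness, faithful flatness, the retraction argument) is formal once that is in place.
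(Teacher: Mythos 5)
Your proposal is correct, but it takes a genuinely different route: what you describe is essentially the \emph{original} construction (attributed in the paper to de~Jong via the first version of~\cite{Zelich2}), namely $B=\Sym_A(E)/(a-a\cdot 1)$ for an extension $0\to A\to E\to M\to 0$ representing $\eta$, with flatness and injectivity read off from the filtration of $B$ whose graded pieces are the $\Sym^k_A(M)$. The paper instead forms the derived pushout $A\otimes_{\Sym_A(M[-1])}A$ along the zero map and the map classified by $\eta$, and quotes \cite[Proposition~2.11]{MathewMondal} for (coconnective) faithful flatness; it explicitly presents this as ``another interpretation of the original construction.'' Your route is more elementary and self-contained (no derived rings), at the cost of the filtration bookkeeping; the paper's route makes flatness a one-line citation and makes the appearance of $\eta$ structurally transparent. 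Two small repairs to your write-up, neither fatal. First, $\coker(f)$ is only \emph{filtered} with graded pieces $\Sym^k_A(M)$, $k\geq 1$; it is not a direct sum, so $M$ is a canonical \emph{submodule} $F_1\coker(f)$ rather than a retract. This costs nothing: to show $\fib(f)^{\otimes_A n}\to A$ is nonzero you only need to \emph{precompose} with the induced map $M^{\otimes_A n}[-n]\to\fib(f)^{\otimes_A n}$, for which the inclusion suffices (your phrase ``composing with a retraction onto $M^{\otimes_A n}[-n]$'' has the arrow pointing the wrong way). Second, the step you flag as ``the real work'' is in fact formal: the map $\fib(f)^{\otimes_A n}\to A$ of \cref{xw6pol} is the $n$-fold tensor power of $\fib(f)\to A$, so its restriction along $(M[-1])^{\otimes_A n}\to\fib(f)^{\otimes_A n}$ is automatically the external $n$-fold product of the class $M[-1]\to\fib(f)\to A$; the only thing to check is that this class equals $\eta$, which holds because the pullback of $0\to A\to B\to\coker(f)\to 0$ along $M=F_1\coker(f)\hookrightarrow\coker(f)$ is exactly $0\to A\to E\to M\to 0$. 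No Koszul-sign computation is needed, and this is precisely the (equally terse) final sentence of the paper's proof.
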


\begin{proof}
Here we give another interpretation of the original construction,
  which I learned through a discussion with Antieau and Stefanich:
  We consider the pushout
  \begin{equation*}
    \begin{tikzcd}
      \Sym_{A}(M[-1])\ar[r]\ar[d]&
      A\ar[d]\\
      A\ar[r,"f"]&
      B
    \end{tikzcd}
  \end{equation*}
  of derived rings,
  where the top and left arrows
  are induced by~\(0\) and \(\eta\in\Ext^{1}_{A}(M,A)\),
  respectively.
  By~\cite[Proposition~2.11]{MathewMondal},
  the top arrow is coconnectively faithfully flat
  in the sense of~\cite[Section~2]{MathewMondal}
  and so is~\(f\).
  By construction,
  we obtain a map \(M\to\coker(f)\)
  such that the class in \(\Ext^{1}_{A}(\coker(f),A)\)
  determined by~\(f\) is the image of~\(\eta\).
\end{proof}

In the Boolean situation, the following variant is useful:

\begin{proposition}\label{xbx6x1}
  In the situation of \cref{deformed},
  assume that \(A\) is Boolean.
  Then the Boolean quotient of~\(B\),
  i.e., the ring obtained by killing \(x^{2}-x\) for all~\(x\),
  is still faithfully flat over~\(A\)
  and has the same property about descendability.
\end{proposition}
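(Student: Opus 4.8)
The plan is to compare the ordinary (animated) pushout $B$ from the proof of \cref{deformed} with its Boolean refinement, and to argue that passing to the Boolean quotient is the effect of base-changing the whole construction along the inclusion of $\FF_2$-algebras into Boolean $\FF_2$-algebras, which is exact on the relevant modules. First I would set up notation: write $B^{\flat}$ for the Boolean quotient of $B$, i.e.\ the universal Boolean $A$-algebra receiving a map from $B$; since $A$ is already Boolean, $B^{\flat}$ is the quotient of $B$ by the ideal generated by $\{x^2-x : x\in B\}$. Because $A\to B$ is faithfully flat hence injective (it is a map out of a Boolean ring to which \cref{xwyfcu} does not directly apply to $B$, but injectivity is guaranteed by flatness plus the existence of the splitting $\coker(f)$ as in the proof of \cref{deformed}), the composite $A\to B\to B^{\flat}$ is a map of Boolean rings, and by \cref{xwyfcu} it is faithfully flat as soon as it is injective. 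So the first real task is injectivity of $A\to B^{\flat}$.

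For injectivity I would use the concrete description of the deformed symmetric algebra. In the undeformed case $\eta=0$ we have $B_0=\Sym_A(M)$, whose Boolean quotient is the free Boolean $A$-algebra on the set underlying $M$ (more precisely, on a generating set of $M$), and this is manifestly faithfully flat over $A$ (a filtered colimit of finite products of copies of $A$, using that $M$ is flat hence, over the absolutely flat ring $A$, a filtered colimit of finitely generated projectives, i.e.\ of idempotent ideals). The deformation by $\eta$ only changes the multiplicative structure by a "twist" supported in the augmentation ideal, and since $A$ is Boolean the relations $x^2-x$ for $x$ ranging over $A$ are already imposed; the additional relations coming from elements of $M$ cut $B$ down to a module that is still a free $A$-module on the Boolean monomials in a basis-like generating set. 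Concretely I would show $B^{\flat}\cong \bigoplus_S A\cdot m_S$ as an $A$-module, where $S$ runs over finite subsets of an index set for a "basis", and the $A$-module structure is unchanged by the deformation because the deforming cocycle lands in $A$ and $A$ is idempotent. This gives both faithful flatness and, reading off the summand $S=\emptyset$, the injectivity of $A\to B^{\flat}$.

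The second task is the descendability statement: I must show that if $\eta^n\in\Ext^n_A(M^{\otimes_A n},A)$ is nonzero, then $A\to B^{\flat}$ is not descendable of index $\leq n$. By \cref{x5dzhu} applied to $A\to B\to B^{\flat}$, it suffices to know that $A\to B$ is not descendable of index $\leq n$ — which is exactly the conclusion of \cref{deformed} — \emph{provided} the map $\coker(A\to B^{\flat})\to\coker(A\to B)$... no: the implication in \cref{x5dzhu} goes the wrong way for a direct quote. Instead I would argue directly, exactly as in the proof of \cref{deformed}: the ideal $\coker(f^{\flat})$ fits in a short exact sequence receiving $M$, because the description of $B^{\flat}$ as $A\oplus(\text{augmentation ideal})$ still admits $M\hookrightarrow B^{\flat}/A$ as the degree-one part (the Boolean relations only identify higher-degree monomials, not the linear term), and the extension class of $f^{\flat}$ in $\Ext^1_A(\coker(f^{\flat}),A)$ pulls back to $\eta$ along $M\to\coker(f^{\flat})$. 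Then the $n$-fold relative tensor power $\fib(f^{\flat})^{\otimes_A n}\to A$ represents a class in $\Ext^n_A(\coker(f^{\flat})^{\otimes_A n},A)$ pulling back to $\eta^n\neq0$ along $M^{\otimes_A n}\to\coker(f^{\flat})^{\otimes_A n}$, so it cannot be null. The main obstacle is the bookkeeping in this last step: one must check that the comparison map $B\to B^{\flat}$ induces a \emph{surjection} $\coker(f)\to\coker(f^{\flat})$ under which the extension classes are compatible and the linear generator $M$ survives, so that the nonvanishing of $\eta^n$ genuinely obstructs descent of $f^{\flat}$ rather than merely of $f$. I expect this compatibility of extension classes across the Boolean quotient — essentially a statement that killing $x^2-x$ does not kill the relevant $\Ext$ class — to be the delicate point, handled by the explicit $A$-module splitting described above.
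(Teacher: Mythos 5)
Your reading of \cref{x5dzhu} is backwards, and this sends you on an unnecessary detour for the descendability half. That lemma says that for \(A\to B\to C\), descendability of index \(\leq n\) for \(A\to C\) implies the same for \(A\to B\). Applied to \(A\to B\to B^{\flat}\), where \(B^{\flat}\) denotes the Boolean quotient as in your notation, its contrapositive states exactly what is needed: since \(A\to B\) is not descendable of index \(\leq n\) by \cref{deformed}, neither is \(A\to B^{\flat}\). This one-line deduction is precisely how the paper disposes of the descendability claim. Your substitute argument, tracking the extension class through \(\coker(f)\to\coker(f^{\flat})\) and its tensor powers, could probably be made to work by naturality, but it is not needed, and you yourself flag its ``delicate point'' without resolving it.

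The faithful flatness half is where your proposal has a genuine gap. The claim that \(B^{\flat}\) is a free \(A\)-module on ``Boolean monomials in a basis-like generating set,'' with module structure unaffected by the deformation, is not justified and is not available in this generality: over the absolutely flat ring \(A\) every module is flat, so \(M\) is an arbitrary \(A\)-module with no basis, and the deformed multiplication genuinely mixes degrees, so there is no evident monomial splitting from which to read off injectivity of \(A\to B^{\flat}\). The paper's argument avoids all of this. Flatness is automatic by \cref{xhwhh2}, so faithful flatness reduces to the nonvanishing of \(B^{\flat}\otimes_{A}k(x)\) for every point \(x\in\Spec A\); every residue field of a Boolean ring is \(\FF_{2}\), the class \(\eta\) becomes trivial after this base change, so \(B\otimes_{A}\FF_{2}\) is the undeformed symmetric algebra and admits a ring map to \(\FF_{2}\), whence its Boolean quotient is nonzero. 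If you insist on your route via \cref{xwyfcu}, you still need injectivity of \(A\to B^{\flat}\), which is exactly what your unproved structural description was supposed to supply.
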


\begin{proof}
It suffices to show the faithful flatness,
  since the rest follows from \cref{x5dzhu}.
  We can check this after base change along each point~\(A \to \FF_{2}\) of~\(\Spec A\),
  and hence we may assume that~\(A\) is~\(\FF_{2}\).
  Since \(\eta\) is trivial in this case,
  we have a section \(A\to B\to\FF_{2}\),
  which shows that the Boolean quotient of~\(B\) is nontrivial.
\end{proof}

\begin{theorem}\label{xl75b9}
  For \(n\geq0\),
  there is a faithfully flat map of Boolean rings
  \(B\to B'\) that is not descendable of index \(\leq n\)
  and \(\lvert B'\rvert<\aleph_{2n}\).
\end{theorem}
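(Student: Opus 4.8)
The plan is to feed the nonvanishing product of \cref{x728ui} into the deformation construction of \cref{deformed,xbx6x1}. One point needs care: \cref{deformed} asks for a \emph{single} class $\eta\in\Ext^1_A(M,A)$ with $\eta^n\neq0$, whereas \cref{x728ui} supplies a product of $n$ \emph{distinct} degree-one classes; packaging them as $\eta_0+\dotsb+\eta_{n-1}$ would only recover the symmetrized product, which carries a factor $n!$ that vanishes mod~$2$ for $n\geq2$. I would sidestep this by letting $M$ be a direct sum of $n$ copies of a single ideal and $\eta=(\eta_0,\dotsc,\eta_{n-1})$, so that one summand of $M^{\otimes_A n}$ computes the \emph{unsymmetrized} product $\eta_0\otimes\dotsb\otimes\eta_{n-1}$ and no factorial intervenes.

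For $n=0$ one takes $B=B'=\FF_2$, which is faithfully flat and, by \cref{x9pw44}, not descendable of index~$\leq0$. Assume $n\geq1$. Set $\bar V=X(0,1)\times X(2,3)\times\dotsb\times X(2n-2,2n-1)$, which is a profinite set of weight $\aleph_{2n-1}$ (each factor $X(2i,2i+1)$ has weight $\aleph_{2i+1}$ by \cref{xy2pou}), and put $B:=A:=\Cls C(\bar V;\FF_2)$, a Boolean ring with $\lvert A\rvert=\aleph_{2n-1}$ by \cref{x9rfq1}. Let $V:=X(0,1)^-\times\dotsb\times X(2n-2,2n-1)^-$, an open subset of~$\bar V$, and let $J\subset A$ be the ideal with $\Spec A\setminus\V(J)=V$; under the (symmetric monoidal) equivalence of \cref{x4mdjv} it corresponds to the extension by zero~$(\FF_2)_V$. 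Take $M:=J^{\oplus n}$, which is flat by \cref{xhwhh2}. Using $\Ext^*_A(J,A)\simeq H^*(V;\FF_2)$ from the proof of \cref{xazmzz}, let $\eta_0,\dotsc,\eta_{n-1}\in\Ext^1_A(J,A)=H^1(V;\FF_2)$ be the classes of \cref{x728ui} (pullbacks along $V\to X(2i,2i+1)^-$ of the class of \cref{xayddy}), and set $\eta:=(\eta_0,\dotsc,\eta_{n-1})\in\bigoplus_{i=0}^{n-1}\Ext^1_A(J,A)=\Ext^1_A(M,A)$.

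The heart of the argument is that $\eta^n\neq0$. Since $(\FF_2)_V\otimes(\FF_2)_V\simeq(\FF_2)_V$, one has $J\otimes_A J\simeq J$, so $M^{\otimes_A n}$ is a direct sum of copies of~$J$ indexed by tuples $(i_1,\dotsc,i_n)\in\{0,\dotsc,n-1\}^n$, and $\eta^n\in\Ext^n_A(M^{\otimes_A n},A)$ splits accordingly, its $(i_1,\dotsc,i_n)$-component being $\eta_{i_1}\otimes\dotsb\otimes\eta_{i_n}$. Under the symmetric monoidal equivalence of \cref{x4mdjv}, the ring structure on $\Ext^*_A(J,A)$ induced by $J\otimes_A J\simeq J$ is the cup product on $H^*(V;\FF_2)$; hence this component equals $\eta_{i_1}\dotsb\eta_{i_n}$, and for $(i_1,\dotsc,i_n)=(0,1,\dotsc,n-1)$ it is $\eta_0\dotsb\eta_{n-1}$, which is nonzero by \cref{x728ui} (applied with $n-1$ in place of~$n$). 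Therefore $\eta^n\neq0$. Verifying this dictionary---that the iterated $\Ext$-product $\eta^n$ restricts, on the summand of $M^{\otimes_A n}$ indexed by $(0,1,\dotsc,n-1)$, to the cup product $\eta_0\dotsb\eta_{n-1}$ on $H^*(V;\FF_2)$---is the step I expect to cost the most; the remainder is bookkeeping.

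Finally, \cref{deformed} applied to $(A,M,\eta)$ produces a faithfully flat map $A\to B''$ (the deformed symmetric algebra) that is not descendable of index~$\leq n$, because $\eta^n\neq0$; and \cref{xbx6x1} shows its Boolean quotient~$B'$ is still faithfully flat over the Boolean ring $B=A$ with the same failure of descendability, the map $B\to B'$ being injective by \cref{xwyfcu}. Since $\lvert M\rvert\leq n\lvert A\rvert=\aleph_{2n-1}$, the ring $B''$---and hence its quotient~$B'$---has cardinality at most~$\aleph_{2n-1}$, so $\lvert B'\rvert\leq\aleph_{2n-1}<\aleph_{2n}$, as required.
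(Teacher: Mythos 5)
Your proposal is correct and follows essentially the same route as the paper's proof: the \(n=0\) case via \(\id_{\FF_{2}}\), and for \(n>0\) the class \(\eta=(\eta_{0},\ldots,\eta_{n-1})\in\Ext^{1}_{B}(J^{\oplus n},B)\) fed into \cref{xbx6x1}, with the nonvanishing of \(\eta^{n}\) detected on the \((0,\ldots,n-1)\)-summand of \((J^{\oplus n})^{\otimes_{B}n}\) --- the paper leaves exactly this last verification implicit, and your unsymmetrized-component argument is the intended one. The only cosmetic difference is that you compactify \(V\) as the full product \(X(0,1)\times\dotsb\times X(2n-2,2n-1)\) rather than as the one-point compactification \(U^{+}\); both contain \(V\) as an open subset of the same weight, so the argument is unaffected.
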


\begin{proof}
  When \(n=0\),
  we take \({\id}_{\FF_{2}}\).
  When \(n>0\),
  we take an example~\(U\)
  and \((\eta_{0},\ldots,\eta_{n-1})\in H^{1}(U;\FF_{2})^{n}\)
  from \cref{main} for \(n-1\).
  We then consider \(B=\Cls{C}(U^{+};\FF_{2})\)
  and its ideal~\(J\)
  consisting of the functions vanishing at infinity.
  We get the desired counterexample
  by using \(\eta\in\Ext^{1}_{B}(J^{\oplus n},B)\)
  as an input of \cref{xbx6x1}.
\end{proof}

\begin{remark}\label{x6rjs7}
\Cref{xl75b9} is likely not optimal.
  One can ask for an example
  where \(B'\) has smaller cardinality;
  see also \cref{xci44g}.
  Nevertheless, \cref{main_des} is optimal.
\end{remark}

\begin{proof}[Proof of \cref{main_des}]
  By considering
  the disjoint union of the examples in \cref{main}
  for each~\(n\),
  we obtain a locally profinite set~\(U\) of weight \(\aleph_{\omega}\)
  with \((\eta_{i})_{i}\in H^{1}(U;\FF_{2})^{\omega}\)
  such that
  \(\eta_{0}\dotsb\eta_{i-1}\neq0\)
  for any~\(i\).
  Then the rest is the same as our argument for \cref{xl75b9}.
\end{proof}

\subsection{The higher sheaves functor is not (heavy) condensed}\label{ss:betti}

We here note the following,
which implies \cref{main_b}:

\begin{theorem}\label{xyhogu}
  Let \(B\to B'\) be the map of \cref{main_des}
  and \(X_{\bullet}\to X_{-1}\)
  denote the augmented simplicial profinite sets
  obtained as the nerve of \(\Spec B'\to\Spec B\).
Then the functor
  \begin{equation*}
    \Mod_{\Shv(X_{-1};\D(\FF_{2}))}\Cat{Pr}
    \to
    \Tot\bigl(\Mod_{\Shv(X_{\bullet};\D(\FF_{2}))}\Cat{Pr}\bigr)
  \end{equation*}
  is not an equivalence.
\end{theorem}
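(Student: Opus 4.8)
The plan is to translate the statement into a question about descent of categories of sheaves and then apply the already-established non-descendability of $B \to B'$ through Stone duality. First I would unwind the two sides. By \cref{x4mdjv}, for each profinite set $X_k$ appearing in the nerve, $\Shv(X_k;\D(\FF_2))$ is the derived $\infty$-category $\D(B_k)$ of the corresponding Boolean ring $B_k = \Cls{C}(X_k;\FF_2)$, and these assemble into an augmented cosimplicial diagram of symmetric monoidal $\infty$-categories $\D(B_{-1}) \to \D(B_\bullet)$ obtained from the Amitsur complex of $f\colon B \to B'$. So the totalization on the right is $\Tot\bigl(\Mod_{\D(B_\bullet)}(\Cat{Pr})\bigr)$, and the functor in question is the canonical comparison $\Mod_{\D(B)}(\Cat{Pr}) \to \Tot\bigl(\Mod_{\D(B_\bullet)}(\Cat{Pr})\bigr)$.

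The key input I would invoke is the general principle (due to Mathew, \cite[Theorem~3.4, Proposition~3.22]{Mathew16}) that for a map of $\E_\infty$-rings $R \to S$, the Amitsur complex $R \to S^{\otimes \bullet + 1}$ is a limit diagram after applying $\Mod_{(-)}(\Cat{Pr})$ — equivalently, $\Mod_R(\Cat{Pr}) \xrightarrow{\sim} \Tot\bigl(\Mod_{S^{\otimes \bullet+1}}(\Cat{Pr})\bigr)$ — if and only if $R \to S$ is descendable. Since $\Spec B' \to \Spec B$ dualizes to $B \to B'$ with $X_\bullet$ the nerve, $\D(B_\bullet)$ is exactly the Amitsur complex $\D(B^{\otimes_{\FF_2}' \bullet + 1})$; note that because $B$ is Boolean and tensor products over $\FF_2$ of Boolean rings are Boolean, each term is again a Boolean ring and $\D$ of it is $\Shv(X_k;\D(\FF_2))$ as required. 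Thus the displayed functor is an equivalence if and only if $B \to B'$ is descendable. By \cref{main_des}, $B \to B'$ is not descendable, so the functor is not an equivalence.

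The one subtlety — and the step I expect to need the most care — is the two-sided identification: I must make sure that the "module categories over sheaf categories" appearing in the statement genuinely match the module categories over $\D(B_k)$ used in Mathew's descendability criterion, and that the comparison maps agree. This is where \cref{x4mdjv} does the work: it gives a symmetric monoidal equivalence $\Shv(\Spec B_k;\D(\FF_2)) \simeq \D(B_k)$ compatible with pullback along $\Spec B_{k+1} \to \Spec B_k$ (these pullbacks are the base-change functors $\D(B_k) \to \D(B_{k+1})$), so passing to $\Mod_{(-)}(\Cat{Pr})$ and totalizing yields exactly the comparison map from Mathew's setup. Once this dictionary is in place, there is nothing left to compute: descendability of $B \to B'$ is precisely the statement that this comparison is an equivalence, and \cref{main_des} supplies the failure. (One should also remark that the argument uses descent of \emph{presentable} $\infty$-categories, i.e.\ the level-one case, which is what $\Cat{Pr}$ encodes; the footnote in \cref{s:intro} explains why going higher changes nothing, but that is not needed here.)
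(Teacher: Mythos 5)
Your proposal is correct and follows essentially the same route as the paper: identify the cosimplicial diagram with the Amitsur complex of $B\to B'$ via Stone duality, and invoke Mathew's result that descent of presentable module categories along this complex forces descendability, contradicting \cref{main_des}. The only quibble is the citation: the relevant statement (equivalence at the level of $\Mod_{(-)}(\Cat{Pr})$ implies descendability) is \cite[Proposition~3.43]{Mathew16}, attributed there to Lurie, rather than the module-spectra descent results you cite.
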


\begin{proof}
By~\cite[Proposition~3.43]{Mathew16},
  which is attributed to Lurie there,
  the equivalence would imply
  that
  \(B=\Cls{C}(X_{-1};\FF_{2})\to\Cls{C}(X_{0};\FF_{2})=B'\)
  is descendable,
  which is false by assumption.
\end{proof}

We conclude this paper with several remarks:

\begin{remark}\label{xmlb8j}
  Note that \cref{main_b} is about descent
  rather than hyperdescent.
  In fact,
  the mere failure of hyperdescent
  can be deduced only from \cref{main_si}
  with more work.
Note also that
  the fundamental question of
  whether \(\Shv(\Cat{PFin}_{\kappa})\)
  is hypercomplete
  is open for any \(\kappa>\aleph_{0}\).
\end{remark}

\begin{remark}\label{xha10z}
\Cref{main_b} is optimal
  since
  \cref{e:sg98b} is also a hypersheaf
  when we replace~\(\aleph_{1}\)
  with~\(\aleph_{\omega}\);
  cf.~\cref{xm9jpg}.
  Hence this result would not convince the reader
  that \(\aleph_{1}\)-condensed mathematics
  is better than \(\aleph_{\alpha}\)-condensed mathematics
  for any other \(\alpha\in\omega+1\).
  Indeed,
  assuming \(\aleph_{\omega}=\beth_{\omega}\),
  it may be convenient to choose \(\alpha=\omega\),
  since it allows us to use extremally disconnected sets.
\end{remark}

\begin{remark}\label{x8fwp0}
  \Cref{main_b} shows that
  the higher sheaves functor is not condensed,
  but we can still consider
  its condensed approximation.
We plan to study this ``completed'' version
  of the presentable \((\infty,n)\)-category
  of higher sheaves in the future.
\end{remark}

\bibliographystyle{plain}
 \newcommand{\yyyy}[1]{}

\end{document}